\documentclass[11pt]{amsart}

\usepackage{amsmath,amssymb,amsthm,mathtools}
\usepackage{graphicx}
\usepackage{lmodern}
\usepackage{microtype}
\usepackage{hyperref}

\hypersetup{
  colorlinks=true,
  linkcolor=blue,
  citecolor=blue,
  urlcolor=blue,
  hypertexnames=false
}
\allowdisplaybreaks

\newtheorem{theorem}{Theorem}[section]
\newtheorem{lemma}[theorem]{Lemma}
\newtheorem{proposition}[theorem]{Proposition}
\newtheorem{corollary}[theorem]{Corollary}
\newtheorem{conjecture}[theorem]{Conjecture}
\theoremstyle{definition}

\theoremstyle{remark}

\newenvironment{exacttable}
  {\begingroup\small
   \setlength{\arraycolsep}{4pt}%
   }
  {\endgroup}

\title[The Dittert conjecture]{A Proof of the Dittert Conjecture via Boundary Rigidity}
\author{Xinan Dai}
\thanks{Xinan Dai is currently a Ph.D. student at Fudan University and a visiting student at the AI for Scientific Simulation and Discovery Lab, Westlake University.}

\address{
Key Laboratory for Information Science of Electromagnetic Waves,
College of Future Information and Technology,
Fudan University, Shanghai, China
}

\curraddr{Department of Artificial Intelligence,
School of Engineering, Westlake University, Hangzhou, China}
\email{xndai23@m.fudan.edu.cn}

\author{Wenhao Deng}
\thanks{Wenhao Deng is a student at the University of Glasgow and is currently an intern at the AI for Scientific Simulation and Discovery Lab, Westlake University.}
\address{University of Glasgow, Glasgow, United Kingdom}
\curraddr{Department of Artificial Intelligence,
School of Engineering, Westlake University, Hangzhou, China}
\email{dengwenhao@westlake.edu.cn}

\author{Yingdong Shi}
\address{School of Information Science and Technology, ShanghaiTech University, Shanghai, China}
\email{shiyd2023@shanghaitech.edu.cn}

\author{Tailin Wu}
\address{Department of Artificial Intelligence,
School of Engineering, Westlake University, Hangzhou, China}
\email{wutailin@westlake.edu.cn}

\author{Yuchen Yang}
\address{Department of Artificial Intelligence,
School of Engineering, Westlake University, Hangzhou, China}
\email{yangyuchen@westlake.edu.cn}
\date{July 30, 2026}
\subjclass[2020]{Primary 15A15; Secondary 05A20, 60J10, 65G30}
\keywords{permanent, Dittert conjecture, doubly stochastic matrix, boundary maximizer, matrix scaling, KKT conditions, reversible Markov chain, exact certification}

\begin{document}

\begin{abstract}
Let $\mathcal{K}_n$ be the simplex of nonnegative $n \times n$ matrices
whose entries sum to $n$. For $A$ in $\mathcal{K}_n$, with row sums
$r_1,\ldots,r_n$ and column sums $c_1,\ldots,c_n$, define
$\Phi(A)=\prod_{i=1}^n r_i+\prod_{j=1}^n c_j-\mathrm{per}(A)$.
Dittert conjectured that $\Phi$ is uniquely maximized by the uniform
matrix $U_n=J_n/n$, with maximum $2-n!/n^n$. We prove the conjecture for
every $n \geq 2$. Hwang's theorem already determines every positive
maximizer, so the essential problem is to exclude maximizers on the
boundary of $\mathcal{K}_n$. A reduction of Cheon and Wanless supplies two
zero entries in distinct rows and columns. In order four, we classify all
admissible support strata and exclude them by exact Karush-Kuhn-Tucker
identities together with a Bernstein-form positivity certificate. For all
$n \geq 5$, an attained-stratum averaging argument compresses a
hypothetical boundary maximizer to a seven-parameter matrix with one
multiplicity $m=n-2$. A symbolic odd-variable expansion, a cofactor
identity at the two missing entries, and uniform coefficient inequalities
then contradict the one-sided multiplier conditions. This argument is
uniform in $m$ and uses no finite enumeration. We also give independent
spectral proofs in dimensions 7 through 13 and prescribed-zero proofs in
dimensions 14 and 15. Every dimension-dependent calculation is certified
over the rationals and recorded explicitly in the appendices.
\end{abstract}

\maketitle

\section{Introduction}

For an \(n\times n\) matrix \(A=(a_{ij})\), its permanent is
\[
 \operatorname{per}A=\sum_{\sigma\in S_n}
 \prod_{i=1}^n a_{i,\sigma(i)}.
\]
The permanent is one of the central nonlinear functionals of
combinatorial matrix theory; standard references include
Minc's monograph and Brualdi--Ryser's treatment
\cite{Minc78,BrualdiRyser}.  Let
\[
 \mathcal K_n=\left\{A\in\mathbb R_{\geq0}^{n\times n}:
       \sum_{i,j}a_{ij}=n\right\}.
\]
For \(A\in\mathcal K_n\), write \(r_i=\sum_j a_{ij}\) and
\(c_j=\sum_i a_{ij}\), and set
\[
 R(A)=\prod_{i=1}^n r_i,\qquad
 C(A)=\prod_{j=1}^n c_j,\qquad
 \Phi(A)=R(A)+C(A)-\operatorname{per}A.
\]
If \(\Omega_n\subset\mathcal K_n\) denotes the Birkhoff polytope, then
\(R(A)=C(A)=1\) on \(\Omega_n\).  Consequently, the inequality
\[
 \Phi(A)\leq 2-\frac{n!}{n^n}
\]
reduces on \(\Omega_n\) to the van der Waerden lower bound
\(\operatorname{per}A\geq n!/n^n\), proved by
Egorychev and Falikman; see, for example, \cite{Egorychev,Minc78}.
Dittert's problem asks whether the same extremal matrix remains unique
when the row and column sums are allowed to vary over the larger simplex
\(\mathcal K_n\).

\begin{conjecture}[Dittert]\label{conj:dittert}
For every \(n\geq2\) and every \(A\in\mathcal K_n\),
\[
 \Phi(A)\leq 2-\frac{n!}{n^n},
\]
with equality if and only if \(A=U_n:=J_n/n\).
\end{conjecture}

The conjecture appears as Conjecture~28 in Minc's survey
\cite{Minc83}; its subsequent development is reviewed in
\cite{CW05}.  Sinkhorn proved the case \(n=2\), as well as positivity of
the permanent at a maximizing matrix \cite{Sinkhorn}.  Hwang established
the case \(n=3\), proved that every positive maximizer is \(U_n\), and
derived the entrywise multiplier relations that underlie essentially all
later boundary analyses \cite{HwangDittert,Hwang87}.  Hwang--Sohn--Kim
studied additional classes of matrices adapted to the Dittert functional
\cite{HSK90}, while Cheon--Hwang introduced a family of sub-Dittert
functionals \cite{CH92}.  Further structural and sufficient conditions
were obtained through monotonicity, permanental-minor, and averaging
arguments \cite{Cheon93,CY}.

Two later papers sharpened the geometric picture.  Cheon--Wanless showed
that a maximizing matrix is fully indecomposable, excluded zero sets that
form a single rectangular block, and proved strong quantitative
restrictions on a near maximizer \cite{CW}.  They also gave an equivalent
weighted-semimatching formulation of the conjecture
\cite{CW07}.  Udayan--Somasundaram subsequently proved that an order-four
maximizer is fully indecomposable \cite{US24}; this leaves the genuinely
boundary, fully indecomposable support strata untreated.  Most recently,
Pang proved the conjecture for \(n\geq17\), and Kafidov treated
\(n=16\), in two preprints based on increasingly sharp scaling estimates
\cite{Pang,Kafidov}.  Thus the remaining difficulty is concentrated in
small and intermediate dimensions, where the asymptotic scaling margins
are insufficient.

The present paper closes the problem by isolating a uniform boundary
mechanism.  Hwang's positive-support theorem reduces the conjecture to the
exclusion of boundary maximizers.  The Cheon--Wanless zero-set theorem then
forces a hypothetical boundary maximizer to contain two zero entries in
distinct rows and distinct columns.  After moving these entries to
\((1,1)\) and \((2,2)\), an attained-stratum averaging argument produces a
maximizer in which the remaining \(n-2\) rows and columns are equal.  The
resulting matrix has seven positive parameters and a single multiplicity
\(m=n-2\).

The proof then separates into two regimes.  In order four, the repeated
block has multiplicity two and the boundary contains several additional
support degenerations.  We first enumerate the admissible fully
indecomposable support types up to row permutations, column permutations,
and transpose.  On the two-zero stratum, three odd perturbation variables
reduce the stationarity equations to a quartic form.  The sum of the two
missing-entry multiplier defects has an explicit cofactor expression.
Exact Hessian inequalities and a tensor-product Bernstein certificate
force this sum to be positive, contradicting the one-sided KKT conditions.

For \(n\geq5\), the same odd-variable expansion retains the multiplicity
\(m=n-2\) symbolically.  The local maximality equations produce two
quadratic coefficients \(\alpha\) and \(\delta\), while the missing-entry
cofactor identity reduces the obstruction to two uniform inequalities.
Their numerators are expanded coefficientwise in the normalized variables
and in the multiplicity \(m\).  Positivity follows from elementary
binomial estimates valid for every \(m\geq3\).  This is the decisive step:
the all-dimensional argument is symbolic, rather than an interpolation
from finitely many computer-checked cases.

Our main result is the following.

\begin{theorem}\label{thm:dittert}
For every integer \(n\geq2\) and every nonnegative \(n\times n\) matrix
\(A\) whose entries sum to \(n\),
\[
 \prod_{i=1}^n r_i+\prod_{j=1}^n c_j-
 \operatorname{per}A
 \leq 2-\frac{n!}{n^n}.
\]
Equality holds if and only if \(A=J_n/n\).
\end{theorem}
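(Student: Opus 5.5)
The plan follows the reduction outlined in the introduction. By compactness of \(\mathcal K_n\) and continuity of \(\Phi\), a maximizer \(A\) exists, and \(\Phi(U_n)=2-n!/n^n\). It therefore suffices to show that every maximizer equals \(U_n\).

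\textbf{Positive maximizers.} If \(A\) has all entries positive, then Hwang's theorem \cite{HwangDittert,Hwang87} gives \(A=U_n\). The cases \(n=2,3\) are already settled by Sinkhorn and Hwang \cite{Sinkhorn,HwangDittert}.

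\textbf{Boundary maximizers.} It remains to exclude maximizers with a zero entry. By Cheon--Wanless \cite{CW}, such a maximizer is fully indecomposable and its zero set is not a single rectangular block. Hence we may permute rows and columns so that \(a_{11}=a_{22}=0\).

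At a boundary maximizer the KKT conditions hold on the simplex. With multiplier \(\lambda\), we have
\[
\partial_{ij}\Phi(A)=\lambda \quad\text{whenever } a_{ij}>0,
\qquad
\partial_{ij}\Phi(A)\le\lambda \quad\text{whenever } a_{ij}=0.
\]
Here
\[
\partial_{ij}\Phi=\frac{R}{r_i}+\frac{C}{c_j}-\operatorname{per}A(i\,|\,j).
\]

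\textbf{Symmetrization.} Next I would average over the symmetric group acting on the last \(n-2\) indices, permuting rows and columns \(3,\dots,n\) simultaneously. I would argue that the averaged matrix stays in the same support stratum and is still a maximizer. This should follow from a concavity-type or attained-stratum argument: on the stratum the maximum is attained, and \(\Phi\) is symmetric under these permutations. The result is a maximizer parametrized by seven positive quantities:
\begin{itemize}
\item the entries \(a_{12},a_{21}\);
\item the row blocks \(x_1,x_2\) and column blocks \(y_1,y_2\) linking indices \(1,2\) to the repeated block;
\item the common value \(z\) inside the \(m\times m\) repeated block, where \(m=n-2\).
\end{itemize}
This averaging step must be done carefully, because averaging can raise the support or fail to preserve maximality. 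I would handle it by restricting to the face where \(a_{11}=a_{22}=0\) and using that the set of maximizers on that face is invariant under the permutations.

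\textbf{Contradiction on the compressed matrix.} The permanent of the compressed matrix, together with its cofactors, has closed forms as polynomials in the seven parameters and in \(m\). These come from expanding along rows \(1,2\) and using \(\operatorname{per}(zJ_k)=k!\,z^k\).

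Second-order maximality along directions that break the \(1\leftrightarrow2\) symmetry gives constraints. These are the "odd" variables: the differences \(a_{12}-a_{21}\), \(x_1-x_2\), \(y_1-y_2\). Expanding to second order yields coefficients \(\alpha,\delta\) whose signs are fixed by the Hessian being negative semidefinite on the tangent space.

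I would then add the two missing-entry conditions \(\partial_{11}\Phi\le\lambda\) and \(\partial_{22}\Phi\le\lambda\). Eliminating \(\lambda\) through a positive-entry equation expresses their sum as a cofactor identity. The goal is to show that, under the stationarity and \(\alpha,\delta\) constraints, this sum is strictly positive, which is the desired contradiction.

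\textbf{Main obstacle.} The crux is proving this positivity uniformly in \(m\). My approach would be to normalize the parameters, clear denominators, and expand the numerator as a polynomial in the normalized variables with coefficients polynomial in \(m\). I would then verify coefficientwise nonnegativity for \(m\ge3\) using binomial estimates such as \(\binom{m}{k}\) ratio bounds.

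\textbf{The case \(n=4\).} Here \(m=2\), and I expect the coefficient argument to fail. Additional zero patterns also become admissible. I would enumerate the fully indecomposable supports up to symmetry and treat each one by the same KKT sum. For the generic two-zero stratum, I would certify the resulting quartic's positivity with a Bernstein-basis expansion on a box.
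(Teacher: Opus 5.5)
Your architecture is the paper's: Hwang and Cheon--Wanless, two independent zeros moved to $(1,1),(2,2)$, compression to seven parameters, odd variables, the cofactor sum $Z_{11}+Z_{22}$, coefficientwise estimates for $m\ge3$, and a Bernstein certificate for $m=2$. However, the compression step you flag as delicate fails as you propose it. \textbf{The compression step.} Averaging over simultaneous permutations of $3,\dots,n$ gives a repeated block $g_0I+g_1(J_m-I)$, not a constant one, so rows and columns must be permuted independently. More seriously, $\Phi$ is not concave, so invariance of the maximizer set under a group says nothing about the group average. What does work is a property special to $\Phi$. On the segment from $A$ to $A$ with rows $i,j$ interchanged, $C$ is constant while $R$ and $\operatorname{per}$ are even quadratics about the midpoint. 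So if rows $i,j$ have the same zero pattern, the segment extends past $A$, maximality makes $\Phi$ constant on it, and the row average is again a maximizer; the same holds for columns. This needs rows $3,\dots,n$ (and columns $3,\dots,n$) to share a zero pattern, and the odd-variable step needs $a,\dots,g>0$. Neither is guaranteed for an arbitrary boundary maximizer. Your claim that the average ``stays in the same support stratum'' is false when $A$ has further zeros. A zero at $(1,3)$ or $(3,4)$ is averaged away, while a zero at $(1,2)$ survives as $a=0$ and destroys the two-sided variation in $\xi$ that odd stationarity uses. \textbf{How the paper handles it.} The paper confronts exactly this point. It passes to the stratum $\mathcal Z$ with zero set exactly $\{(1,1),(2,2)\}$, and notes via $(1-\varepsilon)A+\varepsilon B$, $B\in\mathcal Z$, that $\sup_{\mathcal Z}\Phi$ is the global maximum. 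It then invokes Hwang's attained-stratum averaging lemma (Cheon--Wanless, Lemma~3.4) to get a maximizer in $\mathcal Z$ with equal rows and columns $3,\dots,n$. You assert attainment on ``the stratum'' with no argument. This is also why the paper never treats the $23$ order-four supports individually: it lists them, but excludes every boundary maximizer through the single two-zero stratum. Your support-by-support plan for $n=4$ is unsketched beyond the generic case. If it were really needed, it could not be carried out uniformly for $n\ge5$, where your plan does not address extra zeros at all.

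\textbf{The closing contradiction.} Your plan omits its decisive input. In the paper's reduction, $\alpha$, $\delta$ and $T=K_wK_y/D+\sqrt{\alpha\delta}$ do not involve $x$, while odd stationarity gives $p=\eta\zeta=gx/(m-1)-T/N$ when $p>0$. So stationarity and the signs of $\alpha,\delta$ alone cannot force $Z_{11}+Z_{22}=2m!g^{m-1}(gx-2m\eta\zeta)>0$. What closes the argument is entry positivity, $|\eta|<y$ and $|\zeta|<w$. Together with $\eta/\zeta=\sqrt{\delta/\alpha}$, it gives $p<\min\{y^2\sqrt{\alpha/\delta},\,w^2\sqrt{\delta/\alpha}\}$. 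On the other hand, $Z_{11}+Z_{22}\le0$ forces $p\ge T/(m(m+1)m!g^{m-2})$; the case $\eta\zeta\le0$ is immediate. Because of $\sqrt{\alpha\delta}$, reaching polynomial positivity is itself a step, not just clearing denominators. The paper drops $K_wK_y/D$, uses $T\ge\sqrt{\alpha\delta}$, and proves two square-root-free inequalities for all $m\ge3$ by binomial coefficient estimates. These are $\delta\ge m(m+1)m!t^{m-2}$ and $q^2\delta\ge\alpha$, normalized so that $y=1\le q=w$ and $t=g$. \textbf{Order four.} Here the analogue $\delta\ge12$ can fail, and a box certificate needs the domain to be compactified first. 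The paper handles $\delta\ge12$ directly and shows that otherwise $t<5/16$ and $1\le q<3$. On that box it certifies the cleared numerator, of bidegree $(10,6)$, of $L^2\delta-(12-\delta)^2\alpha\ge0$, where $L=K_wK_y/(2D)$.
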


The load-bearing boundary statement can be separated from the known
positive-support result.

\begin{theorem}[Boundary exclusion]\label{thm:boundary-exclusion}
For every \(n\geq4\), no global maximizer of \(\Phi\) on \(\mathcal K_n\)
has a zero entry.
\end{theorem}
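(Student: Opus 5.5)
We argue by contradiction. Suppose $A\in\mathcal K_n$, $n\ge4$, is a global maximizer of $\Phi$ with a zero entry. By Sinkhorn's theorem $\operatorname{per}A>0$, and by Cheon--Wanless $A$ is fully indecomposable. Their zero-set theorem excludes a zero set forming a single rectangular block. Hence $A$ has two zero entries in distinct rows and distinct columns. After permuting rows and columns, which leaves $\Phi$ invariant, we may take $a_{11}=a_{22}=0$.

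We write down Hwang's KKT conditions with a multiplier $\lambda$ for the constraint $\sum a_{ij}=n$:
\[
 \frac{R}{r_i}+\frac{C}{c_j}-\operatorname{per}A(i|j)=\lambda \quad (a_{ij}>0),\qquad
 \frac{R}{r_i}+\frac{C}{c_j}-\operatorname{per}A(i|j)\le\lambda \quad (a_{ij}=0).
\]
Our aim is to show that at $(1,1)$ and $(2,2)$ the sum of the two defects is strictly positive, which contradicts the inequality.

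The first reduction is an averaging step. Consider the stratum of maximizers with $a_{11}=a_{22}=0$, on which the optimum is attained. The symmetric group on indices $\{3,\dots,n\}$ acts simultaneously on rows and columns and preserves both this stratum and $\Phi$. We would show that a maximizer can be replaced by one invariant under this action, together with the swap $1\leftrightarrow2$ combined with transpose, where that symmetry is available. This is not automatic because $\Phi$ is not concave. We plan to use the multilinearity of $\operatorname{per}$ in a single row or column, together with log-concavity of the products $R$ and $C$, to average two rows at a time: replace rows $i,j\ge3$ by their mean, and likewise for columns. The multiplier equations show that $\Phi$ does not decrease under this step, because the row products increase while the change in the permanent is controlled through the cofactor identity. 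The outcome is a maximizer described by a small parameter set. These are the entries in rows and columns $1,2$ against the block, namely $a_{12},a_{21}$, the border values $x_1,x_2,y_1,y_2$, and the constant block entry $z$. That gives seven positive parameters, with block multiplicity $m=n-2$.

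On this reduced family we evaluate everything in closed form. Expanding along the first two rows and columns writes $\operatorname{per}A$ and the cofactors $\operatorname{per}A(1|1)$ and $\operatorname{per}A(2|2)$ as explicit polynomials in the parameters and in $z^{m}m!$-type terms. We then use the stationarity equations at the positive entries to eliminate $\lambda$. We also perturb along odd directions, meaning antisymmetric changes between index $1$ and index $2$. Second-order local maximality gives sign constraints on two quadratic coefficients, $\alpha$ and $\delta$. Substituting these into the defect sum at $(1,1)$ and $(2,2)$ should reduce the claim to two polynomial inequalities in normalized variables and in $m$. The case $m=2$ (that is, $n=4$) has extra support degenerations, since further entries may vanish. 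We would enumerate the fully indecomposable supports up to symmetry and treat each by the same KKT computation, with a Bernstein-basis positivity certificate for the residual quartic.

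The main obstacle is this final uniform positivity: the coefficientwise expansion of the defect numerators in $m$ must be nonnegative for all $m\ge3$. Our first attempt is to expand each coefficient as a polynomial in $m$ times ratios such as $(1-1/m)^m$. We would then bound these ratios with elementary binomial estimates, so that every coefficient is dominated by a positive leading term. If a coefficient changes sign, we would fall back on the local maximality constraints $\alpha\le0$ and $\delta\le0$ to absorb it, and we would check small $m$ by exact rational computation.
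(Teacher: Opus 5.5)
Your outline follows the paper's architecture: two independent zeros from Cheon--Wanless, the seven-parameter matrix (D56.1) with multiplicity $m=n-2$, odd variables, the cofactor sum at the missing entries, and coefficientwise inequalities in $m$. The first gap is that your averaging argument does not deliver (D56.1) with all seven parameters positive. Write $\kappa_{ik}=R/r_i+C/c_k-\operatorname{per}A(i|k)-\lambda\le0$ for the KKT defects, so $a_{ik}\kappa_{ik}=0$, and put $d=a_i-a_j$. Multilinearity gives $\Delta\operatorname{per}A=\frac14\sum_k d_k\bigl(\operatorname{per}A(j|k)-\operatorname{per}A(i|k)\bigr)$, and hence
\[
\Delta\Phi=\Delta R-\Delta\operatorname{per}A=\tfrac14\sum_k\bigl(a_{ik}\kappa_{jk}+a_{jk}\kappa_{ik}\bigr)\le0 .
\]
Equality holds exactly when every zero in one row that sits opposite a positive entry of the other row is KKT-tight. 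So averaging preserves maximality for rows with a common support. If a boundary maximizer has further zeros in rows or columns $3,\dots,n$, averaging can strictly lower $\Phi$. It also never makes $a_{12}$ or $a_{21}$ positive if they vanish. You take extra zeros seriously only at $n=4$; for $n\ge5$ nothing rules them out, and the number of support patterns grows with $n$.

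You also cannot just declare that the optimum is attained on the stratum with exactly the zeros $(1,1),(2,2)$. Convex mixing only shows that the supremum over that stratum equals the global maximum. The same argument applied to the stratum of positive matrices would, if attainment were automatic, give a positive maximizer and hence, by Hwang, the inequality of the conjecture with no boundary analysis at all. The paper bridges this step by invoking Hwang's attained-stratum averaging lemma (Lemma~3.4 of Cheon--Wanless). It does so for $n=4$ as well, rather than treating the $23$ support types one by one as you propose. Your proposal has no substitute for that input, and you should check precisely what that lemma asserts before relying on it.

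The second gap is that the endgame is missing its mechanism. What closes the argument in the paper is entry positivity, not the signs of $\alpha$ and $\delta$. The cofactor identity $Z_{11}+Z_{22}=2m!g^{m-1}(gx-2m\eta\zeta)$ disposes of $\eta\zeta\le0$. For $p=\eta\zeta>0$, the stationarity equations give $p=gx/(m-1)-T/N$ and $\eta/\zeta=\sqrt{\delta/\alpha}$. The constraints $|\eta|<y$ and $|\zeta|<w$ then cap $p$ below $p_*=\min\{y^2\sqrt{\alpha/\delta},\,w^2\sqrt{\delta/\alpha}\}$. On the other hand, a nonpositive defect sum forces $p\ge T/(m(m+1)m!g^{m-2})$. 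With $y=1$, $t=g$ and $q=w\ge1$, the two inequalities that must be proved are therefore $\delta\ge m(m+1)m!\,t^{m-2}$ and $q^2\delta\ge\alpha$. The paper proves them coefficientwise in $v=q-1$ and $t$, using little more than $m^m>m!$. Your fallback of absorbing sign-changing coefficients with local-maximality constraints and checking small $m$ by exact computation is not a proof for all $m\ge3$. It would need a uniform estimate behind it, which is exactly what the paper supplies.
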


The order-four part of Theorem~\ref{thm:boundary-exclusion} is proved in
Section~\ref{sec:n4}; the uniform \(n\geq5\) part is proved in
Section~\ref{sec:uniform}.  Together with Hwang's positive-support theorem
and the known cases \(n=2,3\), this immediately yields
Theorem~\ref{thm:dittert}.

We retain two independent proof routes because they clarify different
parts of the geometry.  Section~\ref{sec:spectral} converts Hwang's
multiplier equation into an eigenvalue equation for a reversible Markov
chain and excludes dimensions \(7\) through \(13\) by exact cut and
spectral estimates.  Section~\ref{sec:twozero} combines two prescribed
zeroes with exact minimum-permanent calculations to treat dimensions
\(14\) and \(15\).  These arguments are not needed after the uniform
boundary theorem, but they provide independent checks and show precisely
where purely marginal scaling estimates cease to be sharp.

All finite calculations are proof certificates rather than floating-point
experiments.  Sturm sequences are used for exact univariate root counts
and rational isolating intervals; standard background is given in
\cite{BasuPollackRoy}.  Polynomial positivity on rectangles is certified
in the Bernstein basis, whose positivity and convex-hull properties are
reviewed in \cite{Farouki}.  Every rational threshold, support orbit,
coefficient table, and verification routine needed by the proof is printed
in the appendices.

The paper follows the logical structure of the argument.
Section~\ref{sec:prelim} records the extremal reductions and the two-zero
boundary consequence.  Sections~\ref{sec:spectral} and
\ref{sec:twozero} give the independent finite-range proofs.
Section~\ref{sec:n4} treats the order-four boundary, and
Section~\ref{sec:uniform} proves the symbolic multiplicity theorem for all
\(n\geq5\).  Section~\ref{sec:completion} combines the cases.  The
appendices contain the exact rational spectral checks, Sturm data,
capacity countermodels, complete order-four support list, Bernstein table,
and symbolic bookkeeping protocol.

\section{Preliminaries and the boundary reduction}\label{sec:prelim}

Throughout the paper, \(n\geq2\), \(A\in\mathcal K_n\), and
\[
 R=\prod_i r_i,\qquad C=\prod_j c_j,\qquad
 P=\operatorname{per}A,\qquad
 \gamma_n=\frac{n!}{n^n}.
\]
The matrix \(U_n=J_n/n\) satisfies
\(R(U_n)=C(U_n)=1\), \(\operatorname{per}U_n=\gamma_n\), and hence
\(\Phi(U_n)=2-\gamma_n\).

\subsection{The elementary extremal reduction}

\begin{lemma}\label{lem:amgm-vdw}
For every \(A\in\mathcal K_n\), one has \(R\leq1\) and \(C\leq1\).
If a global maximizer \(A\) satisfies \(P=\gamma_n\), then \(A=U_n\).
\end{lemma}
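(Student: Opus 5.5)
The first claim is a direct application of AM--GM. Since the entries of \(A\) sum to \(n\), we have \(\sum_i r_i=n\) and \(\sum_j c_j=n\). Hence
\[
 R=\prod_i r_i\leq\Bigl(\tfrac1n\sum_i r_i\Bigr)^n=1,
\]
and likewise \(C\leq1\). Equality holds exactly when every \(r_i=1\), respectively every \(c_j=1\).

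For the second claim, let \(A\) be a global maximizer with \(P=\gamma_n\). Since \(\Phi(A)\geq\Phi(U_n)=2-\gamma_n\), we get
\[
 R+C\geq 2.
\]
Combined with \(R,C\leq1\), this forces \(R=C=1\). By the equality case of AM--GM, all row and column sums then equal \(1\), so \(A\in\Omega_n\). Now \(A\) is doubly stochastic with \(\operatorname{per}A=n!/n^n\). The equality case of the van der Waerden theorem (Egorychev--Falikman) says the minimum on \(\Omega_n\) is attained only at \(J_n/n\), so \(A=U_n\).

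There is no real obstacle here. The only care needed is to use the uniqueness part of Egorychev--Falikman, not just the bound. One should also note that the maximizer's value is at least \(2-\gamma_n\) simply because \(U_n\in\mathcal K_n\); no prior knowledge of the maximum value is required.
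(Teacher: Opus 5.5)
Your proof is correct and is the paper's argument step for step. You use AM--GM for \(R,C\leq1\), then \(\Phi(A)\geq\Phi(U_n)\) together with \(P=\gamma_n\) to force \(R=C=1\), hence \(A\in\Omega_n\), and finally the equality case of the van der Waerden theorem to conclude \(A=U_n\).
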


\begin{proof}
The row sums and column sums are nonnegative and each family has total
sum \(n\), so both product bounds follow from AM--GM.  If \(A\) is a
global maximizer and \(P=\gamma_n\), then
\[
 R+C-\gamma_n=\Phi(A)\geq\Phi(U_n)=2-\gamma_n.
\]
Thus \(R=C=1\), and equality in AM--GM makes every row sum and every
column sum equal to one.  Hence \(A\in\Omega_n\).  Equality in the van der
Waerden theorem then gives \(A=U_n\) \cite{Egorychev}.
\end{proof}

The following two results are the structural input from the previous
literature.

\begin{theorem}[Hwang]\label{thm:hwang-positive}
A positive global maximizer of \(\Phi\) on \(\mathcal K_n\) is \(U_n\).
Moreover, at a maximizing matrix the directional derivatives at positive
entries agree, while the derivative at a zero entry satisfies the
corresponding one-sided inequality.
\end{theorem}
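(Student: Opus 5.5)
The statement is Hwang's theorem, so I will reconstruct its two parts: the first-order conditions (Lagrange/KKT on the simplex) and then the positive case.

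\emph{Multiplier relations.} Since \(\mathcal K_n\) is a simplex and \(\Phi\) is a polynomial, at any global maximizer \(A\) the KKT conditions hold. There is a constant \(\lambda\) with
\[
 \frac{\partial\Phi}{\partial a_{ij}}=\frac{R}{r_i}+\frac{C}{c_j}-\operatorname{per}A(i|j)=\lambda \quad\text{if } a_{ij}>0,
\]
and \(\partial\Phi/\partial a_{ij}\le\lambda\) if \(a_{ij}=0\). Here \(\operatorname{per}A(i|j)\) is the permanental minor, and \(R/r_i\) means \(\prod_{k\ne i}r_k\), so it stays meaningful when \(r_i=0\). These are exactly the "directional derivatives agree at positive entries" and "one-sided inequality at zero entries" statements.

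The value of \(\lambda\) follows from Euler's identity. \(R\), \(C\) and \(P\) are homogeneous of degree \(n\), so \(\sum a_{ij}\partial_{ij}\Phi=n\Phi\), while the left side equals \(\lambda\sum a_{ij}=n\lambda\). Hence \(\lambda=\Phi(A)\ge 2-\gamma_n\).

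I would also check that no row or column sum vanishes. If one did, then \(R=0\) or \(C=0\), so \(\Phi\le 1<2-\gamma_n\). Therefore \(r_i,c_j>0\) throughout.

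\emph{Positive maximizer.} If all \(a_{ij}>0\), the multiplier equations hold for every entry. Multiply the \((i,j)\) equation by \(a_{ij}\) and sum over \(j\). Using \(\sum_j a_{ij}\operatorname{per}A(i|j)=P\) gives
\[
 R+\sum_j a_{ij}\frac{C}{c_j}-P=\lambda r_i.
\]
Summing over \(i\) instead gives the analogous column identity. The plan is to use these to force all \(r_i\) to be equal and all \(c_j\) to be equal.

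One route is to compare two rows \(i,k\). The difference of their multiplier equations gives \(R/r_i-R/r_k=\operatorname{per}A(i|j)-\operatorname{per}A(k|j)\) for every \(j\). Weighting by \(a_{ij}-a_{kj}\) and summing, with \(\sum_j(a_{ij}-a_{kj})=r_i-r_k\), yields
\[
 (R/r_i-R/r_k)(r_i-r_k)=\sum_j(a_{ij}-a_{kj})\bigl(\operatorname{per}A(i|j)-\operatorname{per}A(k|j)\bigr).
\]
The left side is \(\le0\). I would argue that the right side is \(\ge0\), via an Alexandrov--Fenchel/mixed-discriminant type inequality for the permanent, or via the second-order maximality condition along the direction \(E_i-E_k\) row transfer. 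Together these force \(r_i=r_k\), and symmetrically all \(c_j\) are equal. Then \(A\in\Omega_n\), because the sums are all \(1\) as the total is \(n\). Maximality of \(\Phi\) on \(\Omega_n\) means minimizing the permanent, and Egorychev--Falikman via Lemma~\ref{lem:amgm-vdw} gives \(A=U_n\).

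\emph{Main obstacle.} The hard step is showing that the row and column sums equalize. The KKT conditions are first order and do not obviously rule out unbalanced positive critical points. I would first try the second-order condition: along a perturbation moving mass between two rows in proportion to \(a_{ij}\), the concavity of \(\log R\) pushes toward equal sums. The second derivative of \(P\) along that direction must then be controlled by a permanental-minor inequality, and this is where Hwang's original argument would be needed if my attempt fails.
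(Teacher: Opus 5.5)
Your derivation of the multiplier conditions is correct. KKT on the simplex $\mathcal K_n$, Euler's identity giving $\lambda=\Phi(A)$, and the check that no marginal vanishes are exactly what the paper uses; it gives no argument of its own here and simply cites Hwang and the Cheon--Wanless lemmas.

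The gap is in the first sentence of the statement, the positive-support theorem, at the row-equalization step. Your weighted identity carries no information beyond the multiplier equations. Those equations give $\operatorname{per}A(i|j)-\operatorname{per}A(k|j)=R/r_i-R/r_k$ for every $j$, so at a positive critical point the right-hand side equals $-R(r_i-r_k)^2/(r_ir_k)$ identically. Thus ``right side $\geq 0$'' is a restatement of $r_i=r_k$, not a route to it.

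As a free-standing permanent inequality it is false. Write $\operatorname{per}A=B(x,y)$ as a symmetric bilinear form in row $i=x$ and row $k=y$. Then the right side is $2B(x,y)-B(x,x)-B(y,y)=-B(x-y,x-y)$, which is strictly negative when, for instance, $x=2y$ with $y$ positive.

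Alexandrov--Fenchel, $B(x,y)^2\geq B(x,x)B(y,y)$, points the wrong way. The multiplier equations force $B(x,x)=P+R(r_i-r_k)/r_k$ and $B(y,y)=P-R(r_i-r_k)/r_i$. Inserting these, the inequality collapses to $(r_i-r_k)^2(R-P)\geq 0$, which holds automatically because $\operatorname{per}A\leq\prod_i r_i$. So first-order conditions plus Alexandrov--Fenchel are fully compatible with unequal row sums.

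The second-order alternative is only named, not carried out. Take your row-transfer direction: row $i$ scaled by $1+t$ and row $k$ by $1-r_it/r_k$.
\begin{itemize}
\item Both $R$ and $\operatorname{per}A$ are multiplied by $(1+t)(1-r_it/r_k)$. So $R-P$ contributes $-2(R-P)r_i/r_k\leq 0$ to the second derivative, which is consistent with maximality.
\item The column-product term is $2C\sum_{j<l}h_jh_l/(c_jc_l)$, with $h_j=a_{ij}-(r_i/r_k)a_{kj}$ and $\sum_j h_j=0$. It has no sign in general.
\end{itemize}
No contradiction with $r_i\neq r_k$ emerges without an additional inequality that your proposal does not supply. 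That missing ingredient is exactly the content of Hwang's theorem. As written, your argument proves only the second sentence of the statement (the multiplier relations). For the first, you must either reproduce Hwang's argument or cite it, as the paper does.
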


\begin{proof}
These are the positive-support theorem and multiplier conditions proved in
\cite{HwangDittert,Hwang87}; the precise forms used below are also recorded
in \cite[Lemmas~3.5--3.6]{CW}.
\end{proof}

\begin{theorem}[Cheon--Wanless]\label{thm:cw-boundary}
Every global maximizer of \(\Phi\) is fully indecomposable.  In addition,
the complete zero set of a maximizing matrix cannot be a single
rectangular block.
\end{theorem}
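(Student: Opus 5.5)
The statement is quoted from \cite{CW}, so the proof in the paper will be a citation. Still, this is the argument I would reconstruct. Throughout, I would work with the multiplier conditions of Theorem~\ref{thm:hwang-positive}, written in the explicit form
\[
 \frac{R}{r_i}+\frac{C}{c_j}-\operatorname{per}A(i|j)
\]
for the derivative of \(\Phi\) at entry \((i,j)\). Here \(A(i|j)\) is the submatrix obtained by deleting row \(i\) and column \(j\). This quantity equals a common value \(\lambda\) on the support of \(A\) and is \(\leq\lambda\) off the support. Multiplying by \(a_{ij}\), summing over all entries, and using the Laplace expansion \(\sum_j a_{ij}\operatorname{per}A(i|j)=P\) gives \(n\lambda=nR+nC-nP\). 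Hence \(\lambda=\Phi(A)\), and the multiplier is pinned to the maximum value.

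\emph{Full indecomposability.} Suppose a maximizer is partly decomposable. After permutations, \(A=\begin{pmatrix}B&0\\ D&E\end{pmatrix}\) with \(B\) of size \(k\times k\), so \(P=\operatorname{per}B\,\operatorname{per}E\) and \(P\) does not depend on \(D\).

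1. I would first show \(D=0\). Shifting mass inside a row of \(D\) toward columns with smaller column sums keeps \(P\) and \(R\) fixed and weakly increases \(C\), by Schur-concavity of the product. Transferring all of \(D\) to the diagonal blocks while preserving every row sum and every column sum keeps \(R\) and \(C\) fixed and does not decrease \(P\). The latter step needs \(\operatorname{per}B\) and \(\operatorname{per}E\) to be monotone under entrywise increase, which they are.
2. With \(A=B\oplus E\), I would apply a two-parameter scaling: multiply \(B\) by \(s\) and \(E\) by \(u\), subject to \(ks'+(n-k)u'=n\), where \(s'\) and \(u'\) are the block masses. Then \(R\) and \(C\) scale as \(s^k u^{n-k}\), while \(P\) scales in the same way but with a coefficient \(\operatorname{per}B\,\operatorname{per}E\) that is strictly smaller than the AM--GM products. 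Combining this scaling with the van der Waerden bound inside each block, I expect to show that \(\Phi(B\oplus E)<2-\gamma_n\) unless the blocks are themselves uniform. Two uniform blocks are beaten by \(U_n\) because \(\gamma_k\gamma_{n-k}>\gamma_n\) fails in the needed direction.

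If that route stalls, I would instead evaluate the multiplier inequality at a zero entry \((i,j)\) with \(i\leq k<j\). There \(\operatorname{per}A(i|j)\) involves only the \(D\)-coupled part, and I would show that the inequality is violated after averaging over the block with weights \(r_ic_j\).

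\emph{No single rectangular zero block.} Now suppose the full zero set is exactly an \(a\times b\) rectangle in rows \(I\) and columns \(J\), with \(a+b\leq n\) (by indecomposability). I would average \(A\) over permutations of \(I\), of \(J\), of the complementary rows, and of the complementary columns. Using the convexity inequality for \(R\) and \(C\), and whatever correction the permanent term requires, this reduces to a matrix with four constant blocks, described by three parameters plus the sizes \(a,b\). The KKT equalities on the three positive blocks, together with the one-sided inequality on the zero block, then become explicit polynomial relations in these parameters. I would show directly that the zero-block inequality is violated.

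The main obstacle is the averaging step in this second part. The permanent is not concave, so I cannot simply assert that symmetrization preserves maximality. I would therefore first try to prove that the averaged matrix is still a maximizer, by an attained-stratum argument that uses the fact that every point of the stratum satisfies the same multiplier equalities. Failing that, I would work with the unsymmetrized multiplier identities, summed over the blocks.
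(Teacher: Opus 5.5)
\textbf{This proposal has a genuine gap, and its first step runs in the wrong direction.} The paper does not reprove this statement; it simply cites \cite[Theorems~3.2 and~3.7]{CW}. Your reconstruction, however, does not work as written.

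For $A=\begin{pmatrix}B&0\\ D&E\end{pmatrix}$ one has $s(D)=\sum_{j\le k}c_j-\sum_{i\le k}r_i$. So $D$ cannot be moved into the diagonal blocks while every row and column sum is preserved. Even if it could, ``$R,C$ fixed and $P$ not decreased'' gives $\Phi(A')\le\Phi(A)$, which contradicts nothing.

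More fundamentally, mass in $D$ feeds $R$ and $C$ but is invisible to $P$, so it is \emph{favourable} for $\Phi$. Already for $n=2$, the matrix $\frac23\begin{pmatrix}1&0\\1&1\end{pmatrix}$ has $\Phi=ad+ae+de=\frac43$. Every matrix of the same pattern with $D=0$ is $\operatorname{diag}(a,e)$ with $a+e=2$, and it has $\Phi=ae\le1$. So no reduction of the form ``remove $D$ without lowering $\Phi$'' exists. Your direct-sum step, which is itself only an expectation, could therefore not cover the partly decomposable case even if completed. (Incidentally, $\gamma_k\gamma_{n-k}>\gamma_n$ does hold, and that is exactly why $U_k\oplus U_{n-k}$ loses to $U_n$.)

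Your fallback is the right mechanism. In the example above, the zero entry has derivative $R/a+C/e-\operatorname{per}A(1|2)=2>\frac43=\Phi$, so the one-sided condition fails. A useful general fact is that $\operatorname{per}A(i'|j')=0$ at every position of $D$, because the $k$ rows through $B$ would have to be matched into $k-1$ columns. Hence the equality at a positive entry of $D$ reads $R/r_{i'}+C/c_{j'}=\Phi$. But you give no argument that some zero-block multiplier exceeds $\Phi$ in general, and that is the entire content of the cited Theorem~3.2.

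The second half is likewise a plan rather than a proof. The symmetrization is justified by Hwang's averaging lemma (the paper's \cite[Lemma~3.4]{CW}). Exchanging two rows with the same zero pattern fixes $C$ and makes $\Phi$ a quadratic symmetric about the midpoint. It is not justified by ``every point of the stratum satisfying the multiplier equalities''; that is false, since only maximizers satisfy them. More importantly, the decisive assertion that the zero-block inequality fails on the four-block matrix is never derived.
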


\begin{proof}
See \cite[Theorems~3.2 and~3.7]{CW}.
\end{proof}

\subsection{Two independent zeroes}

Call two positions independent if they lie in distinct rows and distinct
columns.

\begin{corollary}\label{cor:two-independent-zeroes}
Every boundary maximizer contains two independent zero entries.
\end{corollary}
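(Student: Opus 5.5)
Let \(A\) be a boundary maximizer, i.e.\ a global maximizer of \(\Phi\) on \(\mathcal K_n\) with at least one zero entry, and let \(Z\subseteq[n]\times[n]\) be its zero set. By Theorem~\ref{thm:cw-boundary}, \(Z\) is not contained in a single rectangular block \(I\times J\) with \(|I|+|J|\) small enough to be compatible with full indecomposability. More precisely, the second assertion of Theorem~\ref{thm:cw-boundary} says that \(Z\) itself is not of the form \(I\times J\). I will argue by contradiction: suppose that no two entries of \(Z\) are independent. Then any two zero positions share a row or a column.

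The key step is a purely combinatorial lemma. A nonempty set \(Z\) of positions in which every pair shares a row or a column is contained in a single row or in a single column. It cannot be an L-shape or a cross. Indeed, if \((i,j),(i,j')\in Z\) with \(j\neq j'\) and some \((i'',j'')\in Z\) has \(i''\neq i\), then it must share a line with both of these positions. Since \(i''\neq i\), this forces \(j''=j\) and \(j''=j'\), a contradiction. So if \(Z\) has two entries in a common row, all of \(Z\) lies in that row. The column case is symmetric. If \(|Z|=1\), the statement is trivial.

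Hence \(Z=\{i\}\times J\) or \(Z=I\times\{j\}\) for some nonempty \(I,J\). In either case \(Z\) is exactly a rectangular block (a \(1\times|J|\) or \(|I|\times1\) block, including the \(1\times1\) case). This contradicts the second part of Theorem~\ref{thm:cw-boundary}. Therefore \(Z\) contains two independent positions.

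The main point to watch is the precise meaning of ``single rectangular block'' in \cite[Theorem~3.7]{CW}. It must include degenerate \(1\times k\) blocks and single zero entries. If the cited theorem only covered larger blocks, I would close those residual cases separately. A full zero row or column would already violate full indecomposability; that is the first part of Theorem~\ref{thm:cw-boundary}. A partial row of zeros would then need the Cheon--Wanless argument directly, via Hwang's multiplier conditions in Theorem~\ref{thm:hwang-positive}.
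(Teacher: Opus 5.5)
Your proof is correct and is essentially the paper's argument: the paper treats the zero positions as edges of a bipartite graph and notes that a pairwise-intersecting edge family is a star, which is your row/column lemma, then concludes that the zero set is a single rectangular block, contradicting Theorem~\ref{thm:cw-boundary}. The paper reads that theorem as covering the degenerate \(1\times k\) and \(k\times 1\) blocks, including a single zero, so your fallback discussion of smaller cases is not needed.
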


\begin{proof}
View the zero positions as edges of a bipartite graph on the row and column
vertices.  If no two zero edges are independent, then every pair of edges
intersects.  A pairwise-intersecting family of edges in a bipartite graph
is a star, so all zeroes lie in one row or all lie in one column.  The
complete zero set is then a rectangular block, contrary to
Theorem~\ref{thm:cw-boundary}.
\end{proof}

\subsection{Exact-certificate convention}

Whenever a dimension-dependent inequality is verified computationally, all
inputs are rational and all comparisons are performed after
cross-multiplication in \(\mathbb Z\).  Root counts use exact Sturm
variation, interval evaluations use rational Horner arithmetic, and
positive definiteness is certified by rational \(LDL^T\) pivots.  Decimal
values are included only for orientation and never carry a proof.

\section{A spectral exclusion in dimensions 7 through 13}\label{sec:spectral}

\begin{theorem}\label{thm:dittert-7-13}
The Dittert conjecture holds for \(7\leq n\leq13\).
\end{theorem}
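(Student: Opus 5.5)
By Hwang's theorem (Theorem~\ref{thm:hwang-positive}) and Lemma~\ref{lem:amgm-vdw}, it suffices to show that no global maximizer $A\in\mathcal K_n$ with $7\le n\le13$ has a zero entry. So suppose $A$ is a boundary maximizer. We then have $\Phi(A)\ge 2-\gamma_n$, hence $R+C\ge 2-\gamma_n+P>2-\gamma_n$. This forces both $R$ and $C$ to exceed $1-\gamma_n$. Since $\gamma_n$ is tiny in this range, AM--GM stability then pins the row and column sums into an explicit rational window $|r_i-1|,|c_j-1|\le\varepsilon_n$. This is the only marginal (scaling) information used, and I would record the thresholds $\varepsilon_n$ as exact rationals via Sturm isolation of the one-variable extremal problem $\max\prod r_i$ subject to one $r_i$ being far from $1$.

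Next I would convert the multiplier conditions into a spectral statement. Hwang's stationarity says that for every positive entry,
\[
 \frac{R}{r_i}+\frac{C}{c_j}-\operatorname{per}A(i|j)=\lambda,
\]
and at every zero entry the same expression is $\le\lambda$. Multiplying by $a_{ij}$ and summing over $j$ gives $\operatorname{per}A=\sum_j a_{ij}\operatorname{per}A(i|j)$. The weights $a_{ij}\operatorname{per}A(i|j)/\operatorname{per}A$ form the doubly stochastic "permanental assignment" matrix $Q$. Setting $x_i=R/r_i$ and $y_j=C/c_j$, the equalities on the support become a linear system $x_i+y_j=\lambda+p_{ij}$. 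Averaging along $Q$ and its transpose eliminates $\lambda$. It yields $(I-QQ^{T})x=b$, where $b$ is built from the permanental minors, and $QQ^{T}$ is a reversible Markov chain with uniform stationary law.

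The key point is that a zero entry disconnects part of the chain. By Theorem~\ref{thm:cw-boundary} together with Corollary~\ref{cor:two-independent-zeroes}, $A$ is fully indecomposable yet has two independent zeroes. This lets me bound the conductance of $QQ^T$ from above, using the cut through the zero pattern, and from below, using full indecomposability and near-uniform margins. The one-sided inequality at the zero entries then demands a gap $\lambda_2(QQ^T)$ incompatible with the exact eigenvalue estimates.

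Concretely, for each $n$ I would prove two rational inequalities. The first is a Cheeger-type upper bound on the spectral gap, coming from the cut forced by the zeroes and the margin window. The second is a lower bound on the gap needed for the KKT inequality at the zero. The latter comes from $\operatorname{per}A(i|j)\le$ an explicit bound using near-doubly-stochastic rows, for example via Bregman/van der Waerden estimates on minors. Checking that these two bounds are incompatible is a finite computation for $n=7,\dots,13$, done by rational $LDL^T$ certification as in the convention above.

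The main obstacle is getting a lower bound on permanental minors at the zero entries that is sharp enough. The margin window $\varepsilon_n$ alone is weak in low dimensions, which is presumably why $n\le6$ is excluded. I would first try the van der Waerden bound applied after Sinkhorn scaling of the submatrix, and tighten it with the cut estimate if the resulting gap fails for the smallest $n$.
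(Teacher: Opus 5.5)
Your reduction to boundary maximizers and your AM--GM window for subset sums agree with the paper (its bound (9)). The spectral core of the plan, however, rests on a false premise, and this is a genuine gap. Two independent zeros in a fully indecomposable matrix create no bottleneck. If the remaining entries of $A$ are all near $1/n$, then every entry of $QQ^T$ is positive and every cut has conductance bounded below by an absolute constant. So there is no ``Cheeger-type upper bound on the spectral gap coming from the cut forced by the zeroes.''

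You also never say where a gap lower bound ``needed for the KKT inequality at the zero'' would come from. Your system $(I-QQ^T)x=b$ is inhomogeneous, and both $Q$ and $b$ are built from the minors $\operatorname{per}A(i|j)$, which are exactly the quantities you cannot control. It therefore yields no eigenvalue constraint at all.

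The missing idea is to use the Laplace expansion you noticed to \emph{eliminate} the minors. Multiply the multiplier equation by $a_{ij}$ and sum over $j$, and separately over $i$. Zero entries then drop out entirely. With $u=r-\mathbf{1}$ and $v=c-\mathbf{1}$ one gets $(R-P)u=-C\,AD_c^{-1}v$ and $(C-P)v=-R\,A^TD_r^{-1}u$. If $u=0$, Hwang's row-stochastic theorem gives $U_n$. Otherwise $D_r^{-1}u$ is a nonconstant eigenfunction of the reversible, irreducible chain $D_r^{-1}AD_c^{-1}A^T$ with eigenvalue $\theta=(R-P)(C-P)/(RC)$. Then $R,C\le1$ and $\Phi(A)\ge2-\gamma_n$ give $1-\theta\le2\gamma_n-\gamma_n^2$. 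So the gap is forced to be \emph{tiny}, the opposite of what your plan needs.

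The contradiction then comes from a dichotomy that uses neither the zeros nor the one-sided conditions.
\begin{itemize}
\item If every sweep cut has conductance at least $H_\ell$, a path/Cheeger estimate bounds $1-\theta$ from below by more than $2\gamma_n-\gamma_n^2$.
\item If some cut has conductance below $H_\ell$, then $A$ is nearly block diagonal. The doubly-superstochastic criterion with van der Waerden, applied to the two diagonal blocks of $A$ (not to minors at zero entries), gives $P\ge\gamma_\ell\gamma_{n-\ell}(1-\eta_\ell)^n>\gamma_n$. This contradicts $P\le\gamma_n$.
\end{itemize}
Your proposal contains neither the forced-small-gap step nor the block-permanent argument for the low-conductance branch, so as written it cannot be completed into a proof.
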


\begin{proof}
Let \(A\) be a global maximizer and put
\[
 R=\prod_i r_i,\qquad C=\prod_jc_j,\qquad P=\operatorname{per}A.
\]
Write \(P=\gamma_n-\delta\).  Lemma~\ref{lem:amgm-vdw} and maximality
give \(0\leq\delta\leq\gamma_n\), with \(\delta=0\) only at \(U_n\).
Theorem~\ref{thm:hwang-positive} reduces the proof to a boundary maximizer,
so throughout this section \(0<\delta\leq\gamma_n\).

Cheon--Wanless prove that such a maximizer is fully
indecomposable~\cite[Theorem 3.2]{CW}. Hwang's multiplier equation
\cite[Lemmas 3.5--3.6]{CW}, multiplied by \(a_{ij}\) and summed by rows and
columns, gives, with \(u=r-\mathbf1,\ v=c-\mathbf1\),
\[
 (R-P)u=-C\,AD_c^{-1}v,\qquad
 (C-P)v=-R\,A^TD_r^{-1}u.                          \tag{6}
\]
If \(u=0\), Hwang's row-stochastic-maximizer theorem already gives \(A=U_n\).
Otherwise
\[
 AD_c^{-1}A^TD_r^{-1}u=\theta u,\qquad
 \theta=\frac{(R-P)(C-P)}{RC}.                     \tag{7}
\]
The transpose
\[
 \widehat K=D_r^{-1}AD_c^{-1}A^T
\]
is a reversible row-stochastic chain with stationary probabilities
\(\pi_i=r_i/n\). Full indecomposability makes it irreducible.  This is the standard
spectral framework for finite reversible chains; see \cite{LevinPeres}.
Detailed balance turns \(D_\pi^{-1}u\) into a nonconstant right eigenfunction with
eigenvalue \(\theta\). Since \(R,C\leq1\) and
\(\Phi(A)\geq2-\gamma_n\),
\[
 1-\theta\leq2\gamma_n-\gamma_n^2.                 \tag{8}
\]

Put
\[
 a_n=\sqrt{\frac{n\gamma_n}{2(1-\gamma_n)}}.
\]
The subset-sum estimate in the proof of Kafidov's Lemma 3.1 gives, for all
row and column sets,
\[
 |r(I)-|I||\leq a_n,\qquad |c(J)-|J||\leq a_n.      \tag{9}
\]
Order the values of the eigenfunction, put \(d_j=f_{j+1}-f_j\), and let
\(S_j\) be the first \(j\) states. Write
\(\ell_j=\min(j,n-j)\) and \(Q_j=Q(S_j,S_j^c)\). If every such cut has
conductance at least \(H_{\ell_j}\), then reversibility, (9), and
\((f_s-f_i)^2\geq\sum_{j=i}^{s-1}d_j^2\) give
\[
\begin{split}
(1-\theta)\operatorname{Var}_\pi(f)
&\geq\sum_{j=1}^{n-1}d_j^2Q_j\\
&\geq\sum_{j=1}^{n-1}d_j^2
 H_{\ell_j}\frac{\ell_j-a_n}{n}.
\end{split}
\]
Cauchy--Schwarz and
\(\operatorname{Var}_\pi(f)\leq(\max f-\min f)^2/4\) imply
\[
1-\theta\geq
\frac4{\displaystyle\sum_{j=1}^{n-1}
 n/[H_{\ell_j}(\ell_j-a_n)]}.                      \tag{10}
\]

Suppose instead that a cut \(S\), chosen on its stationary smaller side,
has cardinality \(\ell\) and conductance \(h<H_\ell\). By (9) and
\(a_n<1/2\), this is the smaller cardinality of the two sides (with either
choice allowed for the balanced cut). Set
\[
 q_j=\frac{\sum_{i\in S}a_{ij}}{c_j},\qquad
 T=\{j:q_j\geq1/2\}.
\]
The mass \(E\) in the two off-diagonal rectangles satisfies the exact flow
bound
\[
 E\leq2\sum_jc_jq_j(1-q_j)=2h\,r(S)
   <2H_\ell(\ell+a_n).                              \tag{11}
\]
Put \(\eta_\ell=2H_\ell(\ell+a_n)+3a_n\). Equations (9)--(11) first give
\[
 \bigl||T|-\ell\bigr|\leq E+2a_n<\eta_\ell-a_n<1,
\]
so \(|T|=\ell\). For \(I\subseteq S,J\subseteq T\),
inclusion--exclusion inside the diagonal block gives
\[
s(A[I,J])\geq |I|+|J|-\ell-E-3a_n,
\]
and the same holds in the complementary diagonal block. The
doubly-superstochastic criterion and van der Waerden therefore give
\[
 P\geq\gamma_\ell\gamma_{n-\ell}(1-\eta_\ell)^n.    \tag{12}
\]

It remains only to choose the thresholds. All numerical quantities below are
rational. When a reduced margin has an unwieldy numerator and denominator, we
print a shorter strict rational lower bound sufficient for the comparison;
no decimal approximation is proof-bearing. The following entries are exact
rationals:
\begin{exacttable}
\[
\begin{array}{c|c|c}
n&a_n^+&10^4(H_1,\ldots,H_{\lfloor n/2\rfloor})\\ \hline
12&180/10^4&(100,90,75,60,50,40)\\
13&116/10^4&(160,120,95,75,60,50).
\end{array}
\]
\end{exacttable}
The exact-arithmetic procedure in
Appendix~\ref{app:spectral-certificate}
checks over \(\mathbb Q\) that \(a_n<a_n^+\), that for every listed
\(\ell\)
\[
\gamma_\ell\gamma_{n-\ell}
\bigl(1-2H_\ell(\ell+a_n^+)-3a_n^+\bigr)^n>\gamma_n,
\]
and that the right side of (10), weakened by replacing \(a_n\) with
\(a_n^+\), exceeds \(2\gamma_n-\gamma_n^2\). Thus a low-conductance cut
contradicts \(P\leq\gamma_n\) through (12), while the absence of such a cut
contradicts (8) through (10). This excludes boundary maximizers for
\(n=12,13\).

For \(n=11\), replace the uniform bound (9) by an exact size-dependent one.
If a set of \(k\) row or column sums has total \(k+\varepsilon\), AM--GM
gives
\[
\prod_i r_i\ \hbox{or}\ \prod_jc_j
\leq F_k(\varepsilon):=
\left(1+\frac{\varepsilon}{k}\right)^k
\left(1-\frac{\varepsilon}{11-k}\right)^{11-k}.
\]
The function decreases away from zero on either side.  Direct rational
evaluation, recorded in Appendix~\ref{app:spectral-certificate}, verifies
\(F_k(\pm b_k)<1-\gamma_{11}\), and hence
\(|\varepsilon|<b_k\), for the exact rationals
\[
\begin{gathered}
10^6(b_1,\ldots,b_{10})={}\\
(16030,21460,24755,26720,27640,\\
 27640,26720,24755,21460,16030).
\end{gathered}
\]
In (10), replace \(a_n\) for a cut of size \(\ell\) by \(b_\ell\). For a
diagonal block of order \(m\), the Hall marginal loss becomes
\[
B_m=b_m+\max_{\substack{1\leq i,j\leq m\\i+j>m}}(b_i+b_j).
\]
Thus (12) holds with
\[
\eta_\ell=2H_\ell(\ell+b_\ell)+\max(B_\ell,B_{11-\ell}).
\]
The same appendix checks the preliminary block-order inequality and all
block and spectral comparisons for
\[
10^4(H_1,\ldots,H_5)=(45,55,45,38,32).
\]
Its smallest exact margins in the marginal, block, and spectral checks are
strictly greater than the rational numbers
\[
\frac1{40000000},\qquad \frac1{250000},\qquad \frac3{50000},
\]
respectively. Hence dimension \(11\) is excluded as well.

For \(n=10\), retain additionally the shared deficit budget. Write
\(R=1-\rho,C=1-\sigma\); then
\(\rho+\sigma\leq\delta\leq\gamma_{10}\). Preliminary exact AM--GM bounds
first put every subset-sum error inside \(\frac{43}{1000}\). On that
interval,
\[
(-\log F_k)''\geq
\frac{k}{(k+\frac{43}{1000})^2}+
\frac{10-k}{(10-k+\frac{43}{1000})^2}.
\]
Consequently a row-set error is at most \(c_k\sqrt\rho\), and a column-set
error at most \(c_k\sqrt\sigma\), for the rational upper bounds
\[
10^3(c_1,\ldots,c_9)
=(1395,1822,2075,2212,2256,2212,2075,1822,1395).
\]
Cauchy--Schwarz with \(\rho+\sigma\leq\gamma_{10}\) now bounds the two
row-error terms and one column-error term in each block Hall inequality
jointly.  The exact rational data are
\begin{exacttable}
\[
\begin{array}{c|ccccc}
\ell&1&2&3&4&5\\\hline
10^4e_\ell&266&348&396&422&430\\
10^4L_\ell&818&889&931&955&962\\
10^4H_\ell&41&57&48&40&33 ,
\end{array}
\]
\end{exacttable}
where \(e_\ell>c_\ell\sqrt{\gamma_{10}}\), \(L_\ell\) bounds every joint
Hall loss, and \(H_\ell\) is the cut threshold. Thus the low-conductance
alternative gives (12) with
\(\eta_\ell=2H_\ell(\ell+e_\ell)+L_\ell\).

For the other alternative put
\(w_j=H_{\ell_j}(\ell_j-e_{\ell_j})/10\).
Weighted Cauchy--Schwarz along the ordered path gives
\[
\operatorname{Var}_\pi(f)\leq
\left(\frac{1+e_1}{10}\right)^2
\max_h\left(\sum_{i\leq h<r}\sum_{q=i}^{r-1}\frac1{w_q}\right)
\sum_{j=1}^9w_jd_j^2.
\]
Hence the reciprocal of the first two factors is a lower bound for
\(1-\theta\).  Appendix~\ref{app:spectral-certificate} verifies all
defining rational inequalities; its
smallest marginal, block, and spectral margins are strictly greater than
\[
\frac{19}{1000000000},\qquad
\frac9{10000000},\qquad
\frac1{10000},
\]
respectively. These contradict (8) in both alternatives, excluding
dimension \(10\).

For \(n=9\), retain the same deficit budget and sharpen the block estimate
itself. Exact AM--GM comparisons first bound every \(k\)-subset error by
\[
10^4(b_1,\ldots,b_8)=(414,544,615,647,647,615,544,414).
\]
In particular all errors have absolute value less than
\(\frac{13}{100}\). The lower bound
\[
(-\log F_k)''\geq
\frac{k}{(k+\frac{13}{100})^2}+
\frac{9-k}{(9-k+\frac{13}{100})^2}
\]
therefore proves that row-set errors are at most \(c_k\sqrt\rho\) and
column-set errors at most \(c_k\sqrt\sigma\), where
\[
10^3(c_1,\ldots,c_8)
=(1488,1861,2074,2172,2172,2074,1861,1488).
\]
Exact rational comparison also gives the absolute bounds
\[
10^4(e_1,\ldots,e_8)
=(456,570,635,665,665,635,570,456),\qquad
e_k>c_k\sqrt{\gamma_9}.
\]

Let \(S\) be a low-conductance cut on its stationary smaller side,
\(\ell=|S|\leq4\), and define \(T\) by the same column threshold as in
(11). The off-diagonal mass satisfies
\[
 E<2H_\ell(\ell+e_\ell).
\]
The preliminary \(b_k\) bounds force \(|T|=\ell\). For either diagonal
block, let its row and column sets be \(\mathcal U,\mathcal V\), of order
\(m\). If \(I\subseteq\mathcal U,\ J\subseteq\mathcal V\), then
\[
\begin{split}
s(A[I,J])&\geq c(J)-r(\mathcal U\setminus I)-E,\\
s(A[I,J])&\geq r(I)-c(\mathcal V\setminus J)-E.
\end{split}
\]
Writing \(i=|I|,\ j=|J|\), the two losses below \(i+j-m\) are at most
\[
c_{m-i}\sqrt\rho+c_j\sqrt\sigma,\qquad
c_i\sqrt\rho+c_{m-j}\sqrt\sigma.
\]
Consequently their minimum is at most \(L_m\) provided
\[
L_m^2>\gamma_9
\max_{i+j>m}\min\{c_{m-i}^2+c_j^2,c_i^2+c_{m-j}^2\},
\quad c_0=0.
\]
All of these strict comparisons hold for the exact rational data
\begin{exacttable}
\[
\begin{array}{c|c|c|c}
\ell&10^4L_\ell&10^4L_{9-\ell}&10^4H_\ell\\ \hline
1&456&920&55\\
2&570&920&90\\
3&730&876&80\\
4&782&853&65 .
\end{array}
\]
\end{exacttable}
Applying the doubly-superstochastic criterion separately to the two blocks
gives, for every \(\ell\),
\[
P\geq\gamma_\ell\gamma_{9-\ell}
(1-E-L_\ell)^\ell(1-E-L_{9-\ell})^{9-\ell}>\gamma_9,
\]
a contradiction.

Finally suppose no low-conductance sweep cut exists and put
\[
w_j=H_{\ell_j}\frac{\ell_j-e_{\ell_j}}9.
\]
The eigenfunction energy is at least \(\sum_jw_jd_j^2\). If
\[
M_{qr}=\min(q,r)(9-\max(q,r)),
\]
then its uniform variance is \(9^{-2}d^TMd\). Exact rational
\(LDL^T\) elimination as in
Appendix~\ref{app:spectral-certificate} proves that
\[
\operatorname{diag}(w_1,\ldots,w_8)-\frac{M}{500\cdot9^2}
\]
is positive definite. Since \(\pi_i\leq(1+e_1)/9\),
stationary variance is at most \(1+e_1\) times uniform variance. Hence
\[
1-\theta\geq\frac{1/500}{1+e_1}>2\gamma_9-\gamma_9^2,
\]
contradicting (8).  The smallest exact marginal, Hall,
block-permanent, path-pivot, and spectral margins are respectively greater
than
\[
\frac1{500000},\quad\frac3{1000000},\quad
\frac1{125000},\quad\frac1{5000},\quad
\frac1{25000}.
\]
This excludes dimension \(9\).

For \(n=8\), the decisive refinement is to retain the product deficit in the
final comparison. Put \(d=\rho+\sigma\). Since \(d\leq\delta\),
\[
P=\gamma_8-\delta\leq\gamma_8-d.                   \tag{13}
\]
The exact preliminary bounds and error coefficients are
\[
\begin{split}
10^4(b_1,\ldots,b_7)&=(662,859,955,982,955,859,662),\\
10^3(c_1,\ldots,c_7)&=(1555,1877,2049,2103,2049,1877,1555),\\
10^4(e_1,\ldots,e_7)&=(763,921,1005,1032,1005,921,763).
\end{split}
\]
They certify \(|\varepsilon|<b_k<1/5\), subset errors bounded by
\(c_k\sqrt\rho\) or \(c_k\sqrt\sigma\), and
\(e_k>c_k\sqrt{\gamma_8}\).

The direct complementary-subset estimates used in dimension \(9\) now give
Hall losses \(K_m\sqrt d\). Exact Cauchy--Schwarz comparisons hold for
\begin{exacttable}
\[
\begin{array}{c|c|c|c}
\ell&10^3K_\ell&10^3K_{8-\ell}&10^4H_\ell\\ \hline
1&1556&2937&300\\
2&1878&2819&210\\
3&2438&2779&150\\
4&2573&2573&120 .
\end{array}
\]
\end{exacttable}
For a cut below \(H_\ell\), put \(x=\sqrt d\). Its off-diagonal mass is less
than \(2H_\ell(\ell+c_\ell x)\), and separate scaling of the two diagonal
blocks gives
\[
\begin{split}
P\geq\gamma_\ell\gamma_{8-\ell}
&\left(1-2H_\ell\ell-(2H_\ell c_\ell+K_\ell)x\right)^\ell\\
\times&
\left(1-2H_\ell\ell-(2H_\ell c_\ell+K_{8-\ell})x\right)^{8-\ell}.
\end{split}                                         \tag{14}
\]
For each \(\ell\), the right side of (14), minus
\(\gamma_8-x^2\), is an exact degree-eight polynomial.
Appendix~\ref{app:spectral-certificate} verifies
its positivity on \(0\leq x\leq491/10^4\), an interval containing
\([0,\sqrt{\gamma_8}]\), by rational interval Horner evaluation on 16 equal
subintervals. This contradicts (13).

If no low-conductance cut exists, use
\[
w_j=H_{\ell_j}\frac{\ell_j-e_{\ell_j}}8.
\]
Exact \(LDL^T\) elimination on the uniform-variance increment matrix proves
that the weighted path energy is at least \(11/2000\) times uniform
variance. Since stationary variance is at most \(1+e_1\) times uniform
variance,
\[
1-\theta\geq\frac{11/2000}{1+e_1}
>2\gamma_8-\gamma_8^2,
\]
contradicting (8). The smallest exact marginal, Hall, polynomial-interval,
path-pivot, and spectral margins are respectively greater than
\[
\frac1{200000},\quad\frac1{1000},\quad
\frac1{20000},\quad\frac1{500},\quad
\frac3{10000}.
\]
This excludes dimension \(8\).

It remains to handle \(n=7\). Keep \(d=\rho+\sigma\) and \(x=\sqrt d\).
The same maximality inequality gives \(P\leq\gamma_7-d\), but now retain it
also in the eigenvalue estimate. From (7),
\[
\begin{split}
\theta&\geq
\frac{(1-\gamma_7+\sigma)(1-\gamma_7+\rho)}
     {(1-\rho)(1-\sigma)},\\
1-\theta&\leq
\frac{(2-\gamma_7)(\gamma_7-d)}{1-d}.
\end{split}                                         \tag{15}
\]
The exact marginal and Hall coefficients are
\[
\begin{split}
10^3(b_1,\ldots,b_6)&=(106,135,146,146,135,106),\\
10^3(c_1,\ldots,c_6)&=(1484,1801,1941,1941,1801,1484),\\
10^3(K_1,\ldots,K_6)&=(1485,1802,2334,2444,2649,2649).
\end{split}                                         \tag{16}
\]
Exact rational evaluation verifies \(F_{7,k}(\pm b_k)<1-\gamma_7\), the Hessian
comparisons on \(|\varepsilon|<3/20\), and every direct Hall comparison.

We need one additional elementary estimate. For a row cut \(S\), set
\[
 q_j=\frac{s(A[S,\{j\}])}{c_j},\quad
 T=\{j:q_j\geq1/2\},\quad
 Q=\sum_jc_jq_j(1-q_j).
\]
Write
\[
 X=\sum_{j\in T}c_j(1-q_j),\qquad
 Y=\sum_{j\notin T}c_jq_j,\qquad E=X+Y.
\]
Then \(Y-X=r(S)-c(T)=:\Delta\). Once the preliminary estimate
\(E\leq2Q\) forces \(|T|=|S|=\ell\), (16) gives
\[
 |\Delta|\leq\sqrt2c_\ell x.
\]
Every column sum is at least \(c_{\min}=1-c_1x\). Consequently
\[
\begin{split}
Q
&=E-\sum_{j\in T}c_j(1-q_j)^2
     -\sum_{j\notin T}c_jq_j^2\\
&\geq E-\frac{X^2+Y^2}{c_{\min}}
 =E-\frac{E^2+\Delta^2}{2c_{\min}}.
\end{split}                                         \tag{17}
\]
This is the point at which the dimension-eight factor-two estimate loses
too much.

Take
\[
10^4(H_1,H_2,H_3)=(502,373,276).
\]
If a sweep cut has conductance below \(H_\ell\), then
\(Q<H_\ell(\ell+c_\ell x)\).  The finite procedure in
Appendix~\ref{app:spectral-certificate} partitions
\(0\leq x\leq783/10^4\) into 4096 rational intervals. On each it chooses a
multiple \(B\) of \(\frac1{10^8}\) and verifies
\[
\begin{split}
2(1-c_1x)\bigl(B-H_\ell(\ell+c_\ell x)\bigr)
 -B^2-2c_\ell^2x^2&>0,\\
\gamma_\ell\gamma_{7-\ell}
(1-B-K_\ell x)^\ell
(1-B-K_{7-\ell}x)^{7-\ell}
 -\gamma_7+x^2&>0.
\end{split}                                         \tag{18}
\]
On the second line the two parenthesized powers are factors.  The same
exact calculation
also checks \(E<1-c_1x\), so (17) is used on its increasing branch. The
first inequality gives \(E<B\); separate block scaling and the second give
\(P>\gamma_7-d\), a contradiction.

If no sweep cut is small, set
\[
w_j=H_{\ell_j}\frac{\ell_j-c_{\ell_j}x}{7}.
\]
Exact rational \(LDL^T\) pivots give a uniform path gap \(135/10^4\).
Stationary variance is at most \(1+c_1x\) times uniform variance, and exact
interval arithmetic verifies
\[
\frac{135}{10^4}(1-x^2)
 -(1+c_1x)(2-\gamma_7)(\gamma_7-x^2)>0.
\]
This contradicts (15). The smallest exact marginal, Hall, leakage, block,
path-pivot, and spectral margins are respectively greater than
\[
\frac3{100000},\quad\frac1{1000},\quad
\frac1{10^{12}},\quad\frac1{1000000},\quad
\frac1{1250},\quad\frac1{1000}.
\]
Thus dimension \(7\) is excluded and the proof is complete.
\end{proof}

\section{Prescribed zeroes in dimensions 14 and 15}\label{sec:twozero}

\begin{theorem}\label{thm:dittert-14-15}
The Dittert conjecture holds in dimensions \(n=14\) and \(n=15\).
\end{theorem}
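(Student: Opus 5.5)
Let \(A\) be a global maximizer in dimension \(n\in\{14,15\}\) and write \(P=\gamma_n-\delta\) as in the proof of Theorem~\ref{thm:dittert-7-13}. By Theorem~\ref{thm:hwang-positive} and Lemma~\ref{lem:amgm-vdw}, it suffices to show that \(A\) has no zero entry. Suppose instead that \(A\) lies on the boundary. Corollary~\ref{cor:two-independent-zeroes} then gives two independent zeroes, and after permuting rows and columns we may assume \(a_{11}=a_{22}=0\).

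The plan is to confront two bounds on \(P\). The first is an upper bound from maximality. Since \(R,C\leq1\) and \(\Phi(A)\geq2-\gamma_n\), we get \(P\leq\gamma_n-\rho-\sigma\), where \(R=1-\rho\) and \(C=1-\sigma\). The second is a lower bound on \(P\) over all matrices with the prescribed zeroes and with marginals close to \(\mathbf 1\).

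\textbf{Step 1: control the marginals.} For \(n=14,15\) the quantity \(a_n=\sqrt{n\gamma_n/(2(1-\gamma_n))}\) is very small, because \(\gamma_{14}\approx 1.6\cdot10^{-6}\). Hence (9), or its size-dependent AM--GM refinement \(F_k(\varepsilon)<1-\gamma_n\), forces every subset sum \(r(I)\) and \(c(J)\) to lie within \(a_n\) of \(|I|\) and \(|J|\) respectively.

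\textbf{Step 2: scaling lower bound.} I would bound \(P\) below by the minimum permanent over nonnegative matrices with \(a_{11}=a_{22}=0\) and all row and column sums in \([1-a_n,1+a_n]\). Scaling the rows and columns to sums exactly \(1\) changes the permanent by at most a factor \((1-a_n)^{2n}\)-type loss; more precisely, it costs at most \(\prod_i r_i\prod_j c_j\) adjustments, which are bounded by the subset estimates. This reduces Step 2 to a doubly stochastic problem with two prescribed independent zeroes.

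\textbf{Step 3: the exact doubly stochastic minimum.} The minimum permanent over doubly stochastic \(n\times n\) matrices with zeroes at \((1,1)\) and \((2,2)\) is a known-type extremal problem. By the usual symmetrization, which averages over the stabilizer of the zero pattern using the convexity-type arguments behind van der Waerden, the minimizer can be assumed to have the block form with rows and columns \(3,\dots,n\) equal. This leaves a few parameters, and the permanent becomes an explicit polynomial in them. Minimizing it exactly, via Sturm sequences on a univariate reduction, gives a value \(\mu_n\). I expect \(\mu_n\) to exceed \(\gamma_n\) by a factor of roughly \(1+c/n\), since forbidding diagonal entries raises the minimum.

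\textbf{Step 4: compare.} The argument closes if \(\mu_n(1-\eta_n)>\gamma_n\), where \(\eta_n\) is the marginal loss from Step 2. The rational checks \(a_n<a_n^+\) and \(\mu_n^-(1-a_n^+)^{2n}>\gamma_n\) would then contradict \(P\leq\gamma_n\).

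The main obstacle is Step 3, together with the margin in Step 4. Rigorously establishing the exact minimum under two prescribed zeroes needs a genuine lower-bound argument, not just the symmetric candidate. My first attempt would be the superstochastic/Hall route used in (12), applied to the submatrix after expanding the permanent along rows \(1\) and \(2\). If the crude loss \((1-a_n)^{2n}\) is too large, I would retain the deficit \(\rho+\sigma\) as in the \(n=8\) case (13), so that the comparison becomes a polynomial inequality in \(x=\sqrt{\rho+\sigma}\), and certify it on an interval by rational Horner evaluation.
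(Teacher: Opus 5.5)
\textbf{Step 3 is the genuine gap, and you correctly flag it as the main obstacle.} Reducing the two-zero face to the block form cannot come from ``the usual symmetrization''. The permanent is not convex on \(\Omega_n\) or on its faces: on \(J_3/3+s(I-J_3/3)\) it equals \(\frac29+\frac13s^2+\frac49s^3\), which is concave for \(-\frac12\le s<-\frac14\). If it were convex, van der Waerden would follow from Jensen by averaging over \(S_n\times S_n\). So averaging a minimizer over the symmetries of the zero pattern is not known to keep it minimal.

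The paper does not prove this step either. It imports Minc's theorem \cite{Minc84}: the face of \(\Omega_n\) with zeroes at \((1,1),(2,2)\) has a symmetric minimizer \(T_n(\alpha)\). Hence \(\operatorname{per}B\geq q_n=\min_{0\le x\le1/(n-2)}P_n(x)\), which is then computed exactly by Sturm sequences (Lemma~\ref{lem:dittert-certificate}).

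Your fallback, Hall/superstochastic bounds on the minors after expanding along rows \(1,2\), cannot replace this because the margin is far too thin. We have \(q_{14}/\gamma_{14}-1\approx0.006\); the excess is of order \(n^{-2}\), not \(c/n\), since \(J_n/n\) is a critical point of the permanent on \(\Omega_n\). Moreover, the closing inequality \(q_n-\gamma_n>n^3q_n^2/(4(1-\gamma_n))\) has slack of only about \(2\times10^{-9}\) against \(q_{14}-\gamma_{14}\approx4.5\times10^{-8}\). Any lower bound for the face minimum must therefore be accurate to a few parts in \(10^4\), and only the exact minimizer achieves that.

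\textbf{Step 4 as written also fails; your stated fallback is what is actually needed.} Note that \(\gamma_{14}\approx7.85\times10^{-6}\), not \(1.6\times10^{-6}\). With a \(\delta\)-independent marginal loss, \(a_{14}\approx7.4\times10^{-3}\) and \((1-a_{14})^{28}\approx0.81\). That is a loss of about 19\% against a gain of about 0.6\%, so \(\mu_n(1-a_n^+)^{2n}>\gamma_n\) is false.

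The paper keeps the deficit on both sides. Kafidov's joint-deficit lemma \cite[Lemma 3.1]{Kafidov} gives \(A\geq(1-t)B\) entrywise for some doubly stochastic \(B\), where \(t=\sqrt{n\delta/(1-\delta)}\) and \(P=\gamma_n-\delta\). Domination forces \(B_{11}=B_{22}=0\), so \(B\) lies in Minc's face and \(\gamma_n-\delta\geq q_n(1-t)^n\). Bernoulli's inequality and minimizing the resulting quadratic in \(z=\sqrt\delta\) then reduce the contradiction to the scalar inequality of Lemma~\ref{lem:dittert-certificate}. Your variant with \(x=\sqrt{\rho+\sigma}\) and Hall losses assembled from \(c_k\sqrt\rho,\,c_k\sqrt\sigma\) has combined coefficient at most about \(\sqrt n\) by Cauchy--Schwarz, so it would reproduce this argument.

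Relatedly, the ``scaling'' in Step 2 is not the right mechanism. The cost of diagonal scaling is the product of the Sinkhorn factors, a capacity-type quantity not controlled by \(\prod r_i\prod c_j\). The correct tool is domination via the doubly-superstochastic criterion, as in (12), which also preserves the prescribed zeroes automatically.
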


We first record the exact finite calculation used in the proof. Put \(N=n-2\)
and
\[
 T_n(x)=
 \begin{pmatrix}
 0&a&b\mathbf1^T\\
 a&0&b\mathbf1^T\\
 b\mathbf1&b\mathbf1&cJ_N
 \end{pmatrix},
 \qquad
 a=1-Nx,\quad b=x,\quad c=\frac{1-2x}{N}.
\]
Classifying permutations by the images of the first two rows gives
\[
\begin{split}
 P_n(x):=\operatorname{per}T_n(x)
 ={}&N!a^2c^N+2N^2(N-1)!\,ab^2c^{N-1}\\
 &+\bigl(N(N-1)\bigr)^2(N-2)!\,b^4c^{N-2}.
\end{split}
\]

\begin{lemma}\label{lem:dittert-certificate}
Let
\[
 q_n=\min_{0\leq x\leq1/(n-2)}P_n(x).
\]
For \(n=14,15\),
\[
 q_n-\gamma_n-\frac{n^3q_n^2}{4(1-\gamma_n)}>0.
\]
\end{lemma}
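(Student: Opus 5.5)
The statement is a finite, exact inequality for two explicit univariate polynomials, so I would certify it in exact rational arithmetic, following the convention of Section~\ref{sec:prelim}. The left side is increasing in \(q_n\) as long as \(q_n\) lies below the vertex of the quadratic \(q\mapsto q-\gamma_n-n^3q^2/(4(1-\gamma_n))\). That vertex is at \(q=2(1-\gamma_n)/n^3\), which is far above \(\gamma_n\) for \(n=14,15\). So it suffices to produce a rational lower bound \(\underline q_n\) with
\[
 \gamma_n<\underline q_n\leq q_n ,
\]
lying below the vertex, and then to check that \(\underline q_n-\gamma_n-n^3\underline q_n^2/(4(1-\gamma_n))>0\) by cross-multiplication in \(\mathbb Z\). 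Here \(\gamma_n=n!/n^n\) is already rational.

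The bound \(\underline q_n\) comes from the explicit formula for \(P_n(x)\) as a polynomial of degree \(n\) in \(x\) with rational coefficients; \(a\), \(b\) and \(c\) are all linear in \(x\) with rational coefficients. The steps are as follows.

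\begin{enumerate}
\item Expand \(P_n(x)\) exactly over \(\mathbb Q\) for \(N=12,13\).
\item Form \(P_n'(x)\) and compute a Sturm sequence on \([0,1/N]\). This counts and isolates its real roots in rational intervals of small width.
\item On each isolating interval, bound \(P_n\) from below by rational Horner interval evaluation, subdividing as needed. Also evaluate \(P_n\) exactly at the endpoints \(x=0\) and \(x=1/N\). At \(x=1/N\) we have \(a=0\), so only the \(b^4\) term survives.
\item Take \(\underline q_n\) to be the minimum of these certified lower bounds.
\end{enumerate}

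An alternative that avoids root isolation is to convert \(P_n-\underline q_n\) to the Bernstein basis on \([0,1/N]\) after a candidate \(\underline q_n\) is chosen. Positivity of all Bernstein coefficients, after subdivision if necessary, certifies \(P_n\geq\underline q_n\).

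The main obstacle is margin. The inequality is of the form \(q_n>\gamma_n+O(n^3q_n^2)\), and \(q_n\) itself is presumably only modestly larger than \(\gamma_n\): the interior minimum is near \(x\approx1/n\), where \(T_n\) is closest to uniform. So \(\underline q_n\) must be accurate to a few relative digits beyond the gap \(q_n-\gamma_n\). I would first locate the minimizer numerically for orientation only. I would then pick \(\underline q_n\) as a short rational slightly below the numerical minimum, and refine the subdivision until the Bernstein or Horner certificate closes. If the margin at \(n=14\) turns out to be very thin, that case will need the finer subdivision, and I would record the chosen rationals and the minimal certified margins in the appendix, as is done for the spectral certificates.
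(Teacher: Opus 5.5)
Your plan is the paper's certificate: a Sturm sequence for $P_n'$ on $[0,1/N]$, rational isolation of its unique root, interval Horner evaluation there together with exact endpoint values, and then monotonicity of $f_n(q)=q-\gamma_n-n^3q^2/(4(1-\gamma_n))$. One point needs stating explicitly: $f_n$ must be increasing on an interval containing the true $q_n$, not only $\underline q_n$. The paper gets this from the trivial upper bound $q_n\le P_n(0)$, which lies below the vertex $2(1-\gamma_n)/n^3$.
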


\begin{proof}
Appendix~\ref{app:sturm-certificate} gives the complete exact procedure and
its rational output.  The Sturm variation count shows that \(P_n'\) has
exactly one root in the relevant compact interval.  Rational interval Horner
evaluation at the isolating interval and exact endpoint evaluation give
\[
 q_{14}>\frac{15781}{2000000000},\qquad
 q_{15}>\frac{3001}{1000000000}.
\]
Substitution into the displayed expression gives positive rational margins
strictly greater than \(\frac1{500000000}\) and
\(\frac7{1000000000}\), respectively. The expression is increasing
throughout the rational interval containing \(q_n\), as follows from the
last cross-multiplication in that appendix.
\end{proof}

\begin{proof}[Proof of Theorem~\ref{thm:dittert-14-15}]
Let \(A\) be a global maximizer.  By
Theorem~\ref{thm:hwang-positive}, it is enough to consider the boundary.
Write
\[
 \operatorname{per}A=\gamma_n-\delta.
\]
Lemma~\ref{lem:amgm-vdw} gives \(0<\delta\leq\gamma_n\), and
Corollary~\ref{cor:two-independent-zeroes} supplies two zeroes in distinct
rows and columns.

Set
\[
 t=\sqrt{\frac{n\delta}{1-\delta}}.
\]
For \(n=14,15\), \(t<1\). Kafidov's joint-deficit scaling lemma
\cite[Lemma 3.1]{Kafidov} says that \((1-t)^{-1}A\) dominates a doubly
stochastic matrix \(B\). Domination preserves the two independent zeroes,
which may be moved to \((1,1)\) and \((2,2)\).

Minc proved that the face of doubly stochastic matrices with these two
prescribed zeroes has a minimizing symmetric matrix
\cite{Minc84}. His exact later restatement displays \(T_n(\alpha)\) and the
cubic determining \(\alpha\)~\cite[p.~153]{Minc94}. The printed formula for
the corner entry has \(n-1\) where its row sum forces \(n-2\); with this
typographical correction it is precisely the matrix above, and Minc's cubic
is an exact factor of \(P_n'\). Since every \(T_n(x)\) lies in the face,
Minc's theorem gives \(\operatorname{per}B\geq q_n\). Therefore
\[
 \gamma_n-\delta=\operatorname{per}A
 \geq q_n(1-t)^n.                                  \tag{19}
\]

Bernoulli's inequality and \(\delta\leq\gamma_n\), with
\(z=\sqrt{\delta}\), give
\[
\begin{split}
q_n(1-t)^n-(\gamma_n-\delta)
&\geq q_n-\gamma_n+z^2
 -\frac{nq_n\sqrt n}{\sqrt{1-\gamma_n}}z\\
&\geq q_n-\gamma_n-\frac{n^3q_n^2}{4(1-\gamma_n)}>0
\end{split}
\]
by Lemma~\ref{lem:dittert-certificate}. This is the strict reverse of
(19), excluding a boundary maximizer. Thus \(A=U_n\), including the claimed
uniqueness.
\end{proof}

\begin{corollary}
Together with Pang's and Kafidov's prior results, the Dittert conjecture
holds for every \(n\geq7\).
\end{corollary}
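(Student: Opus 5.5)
The corollary is a bookkeeping statement: we split \(n\geq7\) into ranges and invoke the results already available for each. For \(7\leq n\leq 13\) we apply Theorem~\ref{thm:dittert-7-13} directly. For \(n=14,15\) we apply Theorem~\ref{thm:dittert-14-15}. For \(n=16\) we cite Kafidov's preprint \cite{Kafidov}, and for all \(n\geq17\) we cite Pang's theorem \cite{Pang}. These ranges are contiguous and exhaust every integer \(n\geq7\). In each case the cited result gives both the inequality \(\Phi(A)\leq 2-\gamma_n\) and the uniqueness of \(U_n\) as maximizer, so the full form of Conjecture~\ref{conj:dittert} follows.

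The one point needing care is that the cited results must prove exactly the statement of Conjecture~\ref{conj:dittert}, including uniqueness, and not only the inequality. Theorems~\ref{thm:dittert-7-13} and~\ref{thm:dittert-14-15} exclude every boundary maximizer. Combined with Theorem~\ref{thm:hwang-positive}, which says a positive maximizer must be \(U_n\), this forces every maximizer to equal \(U_n\), which gives uniqueness. For Pang's and Kafidov's results I would check that their stated conclusions include the equality case. If they state only the bound, I would recover uniqueness the same way: any maximizer attains the value \(2-\gamma_n\) and is therefore a global maximizer. By Theorem~\ref{thm:hwang-positive} it is \(U_n\) if positive, so the only remaining task is to confirm that their scaling argument also rules out boundary maximizers, as both do by their strict final comparison.

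I expect no real obstacle. The only fragile step is confirming that the external cases \(n=16\) and \(n\geq17\) are stated with the equality clause, and I would handle this by the Hwang reduction just described.
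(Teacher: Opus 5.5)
Your proposal is correct and matches the paper's implicit argument: the corollary is just the concatenation of Theorem~\ref{thm:dittert-7-13} for \(7\leq n\leq13\), Theorem~\ref{thm:dittert-14-15} for \(n=14,15\), Kafidov's result for \(n=16\) and Pang's for \(n\geq17\), with uniqueness in the paper's two theorems coming from boundary exclusion plus Hwang's positive-support theorem. Your fallback paragraph is unnecessary, since the paper cites Pang and Kafidov as proving the full conjecture, equality case included.
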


\paragraph{Limit of this argument.}
The same exact Sturm procedure gives a negative result for this particular
method in dimension \(13\):
at \(\delta=\gamma_{13}/91\), a rational upper bound on \(q_{13}\) and a
rational lower bound on the square root prove
\[
q_{13}(1-\sqrt{13\delta/(1-\delta)})^{13}
 <\gamma_{13}-\delta.
\]
Thus merely removing the Bernoulli slack cannot settle dimension \(13\);
Theorem~\ref{thm:dittert-7-13} reaches it by a genuinely different
spectral route. At this stage of the argument dimensions \(5,6\) remain;
the uniform boundary proof below subsumes them and every dimension above
them. The individual dimension arguments are components of the single
all-dimensional theorem.

\section{The order-four boundary}\label{sec:n4}

There is a useful probabilistic reformulation of the exact target.  Put
\(p_{ij}=a_{ij}/n\), draw \(n\) independent pairs with law \(p\), and let
\(\mathcal R\) and \(\mathcal C\) be the events that all row labels and all
column labels, respectively, occur.  Direct counting gives
\[
 \Pr(\mathcal R)=\gamma_nR,\qquad
 \Pr(\mathcal C)=\gamma_nC,\qquad
 \Pr(\mathcal R\cap\mathcal C)=\gamma_nP.
\]
Consequently
\[
 \Pr(\mathcal R\cup\mathcal C)=\gamma_n\Phi(A).
\]
Thus Dittert's conjecture says exactly that the uniform law uniquely
maximizes this row-or-column complete-coverage probability.

A sharp capacity reduction does not by itself settle any of the three
remaining dimensions.  Gurvits and Leake prove, without a total-support
hypothesis, that a row-stochastic matrix with column-sum vector \(\alpha\)
entrywise dominates \(\mu D\) for some doubly stochastic \(D\), where
\(\mu\geq1-\|\alpha-\mathbf1\|_1/2\)~\cite[Theorem 5.5]{GL}.
Combining this with the exact stationarity identity
\[
 \alpha-\mathbf1=\frac{\Phi(A)}R(c-\mathbf1)
\]
and Minc's two-zero face constant yields a scalar lower bound for \(P\).
The rational countermodels in Appendix~\ref{app:capacity-controls} show that
this bound is insufficient in each of \(n=4,5,6\), even after retaining the
common product deficit and the sharp product-to-\(L^1\) relation.  Therefore
any successful continuation must use entrywise or multiplier information
that these marginal data discard.

\begin{proposition}\label{prop:dittert-n4-supports}
Up to row permutations, column permutations, and transpose, a
dimension-four boundary maximizer has one of \(23\) possible zero-support
types.  Their numbers according to the number of zeroes \(2,\ldots,8\) are
\[
 1,\ 3,\ 6,\ 5,\ 5,\ 2,\ 1.
\]
In fact, no dimension-four boundary maximizer exists.
\end{proposition}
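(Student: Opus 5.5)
The proof has two parts: an enumeration of admissible zero patterns, and then a uniform exclusion argument on one key stratum that every pattern reaches. Let \(Z\subseteq[4]\times[4]\) be the zero set of a hypothetical boundary maximizer \(A\in\mathcal K_4\). The constraints collected so far are the following.
\begin{itemize}
\item \(Z\neq\emptyset\).
\item \(A\) is fully indecomposable by Theorem~\ref{thm:cw-boundary}. Equivalently, the \(0\)-\(1\) pattern of the complement of \(Z\) is fully indecomposable: every entry off \(Z\) lies on a positive diagonal, and there is no \(k\times(4-k)\) zero submatrix.
\item \(Z\) is not a single rectangular block.
\item By Corollary~\ref{cor:two-independent-zeroes}, \(Z\) contains two independent positions.
\end{itemize}

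Full indecomposability forces at least \(2\cdot4=8\) nonzero entries, so \(|Z|\leq8\). Independence forces \(|Z|\geq2\). I would enumerate all subsets \(Z\) with \(2\leq|Z|\leq8\), discard those violating these conditions, and reduce modulo the action of \(S_4\times S_4\) together with transposition. Canonical forms can be computed by brute force over the \(2\cdot(4!)^2\) group elements. This should produce the \(23\) orbits with the stated distribution \(1,3,6,5,5,2,1\), and the list would be printed in an appendix. As a sanity check, the single orbit at \(|Z|=8\) is the pattern \(I+P\) (a cyclic permutation plus the identity), and the single orbit at \(|Z|=2\) is two independent zeroes.

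For nonexistence, the plan is not to treat all \(23\) strata separately. The key step is a reduction to the two-zero stratum. Place the two independent zeroes at \((1,1)\) and \((2,2)\). The function \(\Phi\) is invariant under the group fixing these positions, which swaps rows \(3\) and \(4\) and columns \(3\) and \(4\) and commutes with transpose. I would average a maximizer over this group, using the attained-stratum averaging described in the introduction. Along the segment between \(A\) and its image, \(\Phi\) is a polynomial, and maximality at both endpoints together with the KKT conditions of Theorem~\ref{thm:hwang-positive} should show that the average is still a maximizer. The averaged matrix has zeroes at \((1,1)\) and \((2,2)\), and possibly more zeroes if \(Z\) was larger. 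The extra-degenerate patterns then have to be excluded directly. There are finitely many, and each reduces to a low-dimensional polynomial system: the equal-multiplier equations at positive entries together with the one-sided inequalities at the zeroes. These can be eliminated by exact resultants or Sturm counts.

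On the generic two-zero stratum, after symmetrization the matrix has the seven-parameter form with \(m=2\). I would write the departure from symmetry in three odd variables, meaning those that change sign under the \(3\leftrightarrow4\) swap. The stationarity equations are even in these variables, so they reduce to a quartic form in them. The missing-entry derivatives \(\partial\Phi/\partial a_{11}\) and \(\partial\Phi/\partial a_{22}\) must each be at most the common multiplier \(\lambda\). The contradiction will come from showing that the sum of the two defects, \(\partial_{11}\Phi+\partial_{22}\Phi-2\lambda\), is strictly positive.

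For this sum I would use the cofactor expansion \(\partial_{ij}\operatorname{per}A=\operatorname{per}A(i|j)\) together with the explicit derivatives of \(R\) and \(C\). Substituting the stationarity relations eliminates \(\lambda\). What remains is a rational function of the normalized parameters on a box. Its positivity would be shown in two steps: exact Hessian inequalities (second-order maximality, which bounds the odd variables), followed by a tensor-product Bernstein expansion with all coefficients positive. I expect the main obstacle to be this final positivity certificate. The region is not naturally compact, the defect sum may vanish to high order near the uniform matrix, and the certificate may need a subdivision of the box before all Bernstein coefficients become positive. I would try subdivision first, and fall back to factoring out the vanishing factor explicitly before expanding.
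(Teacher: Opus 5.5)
\textbf{There is a genuine gap: nonexistence is not established outside the exact two-zero stratum.} Your enumeration is the paper's: same admissibility test, same orbit reduction, listed in Appendix~\ref{app:support-enumeration}. Your outline on the exact two-zero stratum also follows the paper's route. The problem is everything else in the nonexistence claim, in two places.

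\emph{The averaging step.} Averaging preserves maximality only when the two rows (or columns) being averaged have the same zero pattern. Along the segment $A(s)$ from $A$ to its $3\leftrightarrow4$ row swap, the column sums are fixed, while $R$ and $\operatorname{per}A$ each change by $s(1-s)$ times a constant. Hence $\Phi(A(s))=\Phi(A)+s(1-s)\beta$, where $\beta$ is the derivative at $s=0$:
$\beta=\sum_k(\partial_{3k}\Phi-\partial_{4k}\Phi)(a_{4k}-a_{3k})$.
The KKT conditions force $\beta=0$ when rows $3,4$ have equal supports, but only $\beta\leq0$ otherwise. So if $Z$ has further zeros in rows or columns $3,4$, the averaged matrix need not be a maximizer. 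Your claim that it is one ``possibly with more zeroes'' is unjustified.

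\emph{The remaining $22$ types.} Excluding them ``directly'' is a placeholder, not an argument. Take $Z=\{13,22,31\}$: the three zeros are pairwise independent, so after moving any two to $(1,1),(2,2)$ the third lands in one of $(3,3),(3,4),(4,3),(4,4)$. No symmetrization is then available, and the first-order system has $13$ free entries, which is not low-dimensional. Nor is there reason to expect these systems to be infeasible. First-order data cannot exclude KKT points that are not global maxima, so value or second-order information would be needed. As written, you have nonexistence on at most one of the $23$ strata.

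\textbf{How the paper avoids this.} It never treats the other strata individually. Let $\mathcal Z$ be the matrices whose zero set is exactly $\{(1,1),(2,2)\}$. The paper observes that $(1-\varepsilon)A+\varepsilon B\in\mathcal Z$ for $B\in\mathcal Z$, so $\sup_{\mathcal Z}\Phi$ is the global maximum. It then invokes Hwang's attained-stratum averaging lemma (Lemma~3.4 of Cheon--Wanless) to obtain a maximizer inside $\mathcal Z$ with rows $3,4$ and columns $3,4$ equal. After that, only the seven-parameter analysis remains.

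If you adopt this bridge, be aware that the mixing observation alone gives only the supremum. The load-bearing point is the existence of a maximizer in $\mathcal Z$ itself. By the computation above, averaging need not produce one from a maximizer whose rows $3,4$ have different supports.

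\textbf{Two smaller corrections on the seven-parameter stratum.}
\begin{enumerate}
\item The odd variables are odd under the simultaneous interchange of rows $1,2$ and columns $1,2$, which fixes the zero pair and exchanges $(a,b,e)$ with $(c,d,f)$. They are not odd under $3\leftrightarrow4$, which acts trivially once rows and columns $3,4$ are equal.
\item The odd variables are bounded by entry positivity, $|\eta|<y$ and $|\zeta|<w$, not by the Hessian; the Hessian only yields $\alpha,\delta>0$.
\end{enumerate}
For reference, the paper then splits on the sign of $\eta\zeta$ in $Z_{11}+Z_{22}=4g(gx-4\eta\zeta)$. It uses stationarity to write $\eta\zeta=gx-T/4$, with $T=K_wK_y/(2D)+\sqrt{\alpha\delta}$, and certifies $T\geq12\min\{y^2\sqrt{\alpha/\delta},\,w^2\sqrt{\delta/\alpha}\}$. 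Your non-compactness worry is resolved in the normalization $y=1$, $t=g$, $q=w\geq1$: $\delta<12$ forces $t<5/16$, and $\alpha>0$ forces $q<3$. A single bidegree-$(10,6)$ Bernstein expansion then suffices, with no subdivision.
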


\begin{proof}
By Theorem~\ref{thm:cw-boundary}, a boundary maximizer is fully
indecomposable, and Corollary~\ref{cor:two-independent-zeroes} shows that
its zero graph contains two independent edges.  Exhausting the \(2^{16}\)
zero masks subject to these two conditions and taking the indicated group
orbits gives the displayed list.  Appendix~\ref{app:support-enumeration}
prints all \(23\) orbit representatives and gives pseudocode for the
exhaustion, so the classification does not rely on an external file.

Now let \(A\) be any boundary maximizer and move an independent zero pair
to \((1,1),(2,2)\).  Let \(\mathcal Z\) be the stratum of matrices having
exactly these two zeroes.  If \(B\in\mathcal Z\), then
\((1-\varepsilon)A+\varepsilon B\in\mathcal Z\) for every
\(\varepsilon>0\).  Continuity therefore makes the supremum of \(\Phi\) on
\(\mathcal Z\) equal to the global maximum.  Hwang's attained-stratum
averaging lemma~\cite[Lemma 3.4]{CW} supplies \(Z_0\in\mathcal Z\) attaining
that supremum, with equal rows \(3,4\) and equal columns \(3,4\).
Consequently \(Z_0\) is a global maximizer of the form
\[
 A=\begin{pmatrix}
 0&a&b&b\\ c&0&d&d\\ e&f&g&g\\ e&f&g&g
 \end{pmatrix},\qquad a,b,c,d,e,f,g>0.
\]
Put
\[
\begin{gathered}
a=x+\xi,\quad c=x-\xi,\quad b=y+\eta,\quad d=y-\eta,\\
e=w+\zeta,\quad f=w-\zeta,
\end{gathered}
\]
so \(|\xi|<x,\ |\eta|<y,\ |\zeta|<w\).  If \(F=\Phi(A)\) and \(F_0\)
is its value at \(\xi=\eta=\zeta=0\), exact expansion gives
\[
\begin{split}
F-F_0=2[&-D\xi^2-K_w\xi\eta+K_y\xi\zeta-M_w\eta^2
 +4gx\eta\zeta\\
&-M_y\zeta^2-2\eta^2\zeta^2],
\end{split}                                                    \tag{D4.1}
\]
where
\[
\begin{gathered}
D=3g^2+4g(w+y)+2(w^2+y^2),\\
K_w=8g^2+12gw+8w^2,\quad K_y=8g^2+12gy+8y^2,\\
M_w=8g^2+16gw+6w^2,\quad M_y=8g^2+16gy+6y^2.
\end{gathered}
\]
Odd stationarity first gives
\[
\xi=\frac{-K_w\eta+K_y\zeta}{2D}.
\]
After substitution, (D4.1) becomes
\[
-\alpha\eta^2+2\beta\eta\zeta-\delta\zeta^2
-4\eta^2\zeta^2,                                  \tag{D4.2}
\]
where
\[
\alpha=2M_w-\frac{K_w^2}{2D},\quad
\delta=2M_y-\frac{K_y^2}{2D},\quad
\beta=4gx-\frac{K_wK_y}{2D}.
\]
At a nonzero odd stationary point,
\[
\alpha\eta=\beta\zeta-4\eta\zeta^2,\qquad
\delta\zeta=\beta\eta-4\eta^2\zeta,\qquad
\alpha\eta^2=\delta\zeta^2.                       \tag{D4.3}
\]
The determinant of the \((\eta,\zeta)\)-Hessian there is
\(64\alpha\eta^2\).  If \(\eta\zeta=0\), (D4.2)--(D4.3) show that the fixed
midpoint has the same objective, but no relation between the independent
positive parameters \(y\) and \(w\) is needed: the one-sided derivative
identity (D4.4) below directly excludes this case.  We may therefore analyze
the remaining case \(\eta\zeta\ne0\).  Local maximality gives
\(\alpha,\delta\geq0\).  Their simultaneous vanishing is impossible:
after normalizing \(g=1\), put \(s=y+w,\ p=yw\).  The difference of their
cleared numerators is
\[
(y-w)\{-7s-4(s^2-2p)+2s(s^2-2p)\}.
\]
The case \(y=w\) makes \(\alpha>0\).  Otherwise simultaneous vanishing gives
\(2(s-2)(s^2-2p)=7s\).  Since \(p>0\), this forces \(s>3\), while their
cleared sum is a positive multiple of \(N(s)/(s-2)^2\), where
\[
\begin{split}
N(t+3)={}&4t^6+68t^5+452t^4+1444t^3\\
&+2145t^2+1149t+158>0.
\end{split}
\]
Thus \(\alpha,\delta>0\).

For \((i,j)=(1,1),(2,2)\), let \(Z_{ij}\) be the difference between the
missing-entry derivative and the common positive-entry derivative.  Direct differentiation gives
\[
Z_{11}+Z_{22}=4g(gx-4\eta\zeta).                  \tag{D4.4}
\]
Indeed the column-product derivative terms cancel in the sum, while the
four permanental cofactors are
\[
4dfg,\quad 2cg^2+4deg,\quad 4beg,\quad 2ag^2+4bfg,
\]
which give (D4.4) after substituting the odd coordinates.
If \(\eta\zeta\leq0\), this is positive and one zero violates Hwang's
one-sided condition.  Suppose \(p_0=\eta\zeta>0\), and put
\[
L=\frac{K_wK_y}{2D},\quad T=L+\sqrt{\alpha\delta},\quad
m=\min\left\{y^2\sqrt{\frac\alpha\delta},
w^2\sqrt{\frac\delta\alpha}\right\}.
\]
Equations (D4.3) and entry positivity give
\[
p_0=gx-\frac T4,\qquad p_0<m.                     \tag{D4.5}
\]
The remaining exact coefficient inequality is
\[
T\geq12m.                                         \tag{D4.6}
\]
For completeness, suppose \(w\geq y\), normalize \(y=1\), and write
\(t=g/y,\ q=w/y\); the other case is symmetric.  One checks that the first
term defines \(m\).  If \(\delta\geq12\), (D4.6) is immediate.  Otherwise,
clearing denominators first forces \(0<t<5/16\); \(\alpha>0\) then forces
\(1\leq q<3\).  Squaring (D4.6) is equivalent to
\[
L^2\delta-(12-\delta)^2\alpha\geq0.               \tag{D4.7}
\]
After its positive denominator is cleared and
\(t=5u/16,\ q=1+2v\), all \(77\) exact tensor-product Bernstein
coefficients of bidegree \((10,6)\) are nonnegative; the relevant positivity
principle is the convex-hull property of the Bernstein basis
\cite{Farouki}.  Only the coefficient
at \((0,0)\) is zero.  Appendix~\ref{app:bernstein-certificate}
derives the minimum selector and the two rectangle reductions, defines the
Bernstein conversion, and prints all \(77\) scaled integer coefficients.
Hence (D4.6) holds, strictly here.

Finally, if (D4.4) were nonpositive, (D4.5) would give
\(p_0\geq gx/4\), then \(gx\geq T/3\), and consequently
\(p_0\geq T/12\geq m\), contradicting (D4.5).  Thus one of the two missing
entries always violates the one-sided multiplier condition.
\end{proof}

\begin{corollary}
The Dittert conjecture holds in dimension \(4\).
\end{corollary}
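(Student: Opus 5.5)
The plan is to combine the known positive-support theory with Proposition~\ref{prop:dittert-n4-supports}. First, by compactness of \(\mathcal K_4\) and continuity of \(\Phi\), a global maximizer \(A\) exists. Then I split according to whether \(A\) has a zero entry.

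If \(A\) has no zero entry, Theorem~\ref{thm:hwang-positive} gives \(A=U_4\) directly. If \(A\) has a zero entry, it is a boundary maximizer in dimension four. The final assertion of Proposition~\ref{prop:dittert-n4-supports} says that no such matrix exists. Hence every global maximizer equals \(U_4\).

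It remains to obtain the inequality together with its equality case. Since \(U_4\) is the unique maximizer, we get \(\Phi(A)\le\Phi(U_4)=2-\gamma_4=2-24/256\) for all \(A\in\mathcal K_4\). Equality in this inequality means \(A\) attains the maximum, so \(A=U_4\). This is exactly Conjecture~\ref{conj:dittert} for \(n=4\).

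No step here is a real obstacle, since all the work sits in Proposition~\ref{prop:dittert-n4-supports}. The one point to check is that the proposition excludes \emph{every} boundary maximizer, not only those in the averaged two-zero stratum. It does: its proof begins with an arbitrary boundary maximizer and replaces it by a maximizer \(Z_0\) of the special averaged form. Excluding \(Z_0\) therefore shows that the global maximum cannot be attained on the boundary at all, which is what the argument above needs.
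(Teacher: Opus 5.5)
Your proposal is correct and is essentially the paper's own argument: Proposition~\ref{prop:dittert-n4-supports} rules out every boundary maximizer, and Theorem~\ref{thm:hwang-positive} identifies \(U_4\) as the only positive maximizer. The compactness step and the explicit derivation of the inequality with its equality case are routine details that the paper leaves implicit.
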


\begin{proof}
There is no boundary maximizer by
Proposition~\ref{prop:dittert-n4-supports}. Hwang's positive-support theorem
makes \(U_4\) the unique positive maximizer.
\end{proof}

\section{The uniform boundary argument}\label{sec:uniform}

\begin{proposition}\label{prop:dittert-nge5}
The Dittert conjecture holds in every dimension \(n\geq5\).
\end{proposition}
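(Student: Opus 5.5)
We mirror the order-four argument of Proposition~\ref{prop:dittert-n4-supports}, keeping the multiplicity \(m=n-2\geq3\) as a symbolic parameter throughout.

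\textbf{Reduction to a seven-parameter matrix.} Let \(A\) be a boundary maximizer; by Theorem~\ref{thm:hwang-positive} it suffices to rule this out. Corollary~\ref{cor:two-independent-zeroes} gives two independent zeroes, which we move to \((1,1),(2,2)\). Let \(\mathcal Z\) be the stratum with exactly these zeroes. Segments \((1-\varepsilon)A+\varepsilon B\) show that \(\sup_{\mathcal Z}\Phi\) is the global maximum. Hwang's attained-stratum averaging lemma \cite[Lemma~3.4]{CW}, applied to rows and columns \(3,\dots,n\), then produces a global maximizer
\[
A=\begin{pmatrix}0&a&b\mathbf1^T\\ c&0&d\mathbf1^T\\ e\mathbf1&f\mathbf1&gJ_m\end{pmatrix},\qquad a,\dots,g>0 .
\]
We introduce the midpoint and odd coordinates \(a,c=x\pm\xi\), \(b,d=y\pm\eta\), \(e,f=w\pm\zeta\). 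Since \(R\), \(C\) and \(P\) are explicit in these entries (the permanent is obtained by classifying where rows \(1,2\) go, as for \(T_n\)), we expand \(F-F_0\) in \((\xi,\eta,\zeta)\) symbolically in \(m\). We expect the same shape as (D4.1): a quadratic form in the odd variables plus a quartic \(\eta^2\zeta^2\) term, with coefficients \(D,K_w,K_y,M_w,M_y\) that are polynomials in \(g,w,y,x\) and \(m\). We use the constraint on the total sum to eliminate \(x\).

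\textbf{Stationarity and the cofactor identity.} Eliminating \(\xi\) by odd stationarity gives a form \(-\alpha\eta^2+2\beta\eta\zeta-\delta\zeta^2-c_m\eta^2\zeta^2\), together with analogues of (D4.3). Local maximality forces \(\alpha,\delta\geq0\). Strict positivity follows, as in dimension four, by comparing the cleared numerators. Next we compute \(Z_{11}+Z_{22}\) from the permanental cofactors at the two missing entries; the column-product terms again cancel in the sum. We expect \(Z_{11}+Z_{22}=\kappa_m(g)\,(\lambda_m gx-\mu_m\eta\zeta)\) with explicit positive factors. If \(\eta\zeta\leq0\), this quantity is positive and Hwang's one-sided condition fails at a zero, which is a contradiction. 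If \(\eta\zeta>0\), stationarity gives \(\eta\zeta=gx\cdot(\text{const})-T/(\text{const})\) with \(T=L+\sqrt{\alpha\delta}\), while entry positivity \(|\eta|<y\), \(|\zeta|<w\) gives \(\eta\zeta<\min\{y^2\sqrt{\alpha/\delta},w^2\sqrt{\delta/\alpha}\}\). The contradiction then reduces to two uniform inequalities, the analogues of (D4.6): a lower bound of \(T\) by a multiple of this minimum, and a sign condition relating \(\alpha,\delta\) to the quartic coefficient.

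\textbf{Main obstacle.} The hard step is proving these two inequalities for all \(m\geq3\) at once. After normalizing \(y=1\) and taking \(w\geq y\) by symmetry, we square to remove radicals, as in (D4.7), and clear denominators. This yields polynomials in \(t=g/y\), \(q=w/y\) and \(m\). We would first substitute \(m=3+k\) and \(q=1+v\), then expand coefficientwise, hoping every coefficient becomes a polynomial in \(k\) with nonnegative coefficients, or nonnegative after grouping terms using elementary binomial bounds such as \((1+1/m)^m\) estimates. Where sign-indefinite monomials remain, we would restrict to the region allowed by \(\delta<\) threshold, as was done with \(t<5/16\) in order four, and dominate the negative terms by adjacent positive ones. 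Bookkeeping of the \(m\)-dependence through the \((m-1)!\)-type cofactor ratios is where the argument is most likely to need care.
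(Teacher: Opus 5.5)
Your setup matches the paper's up to the final step. That covers the seven-parameter reduction, the odd coordinates, the elimination of \(\xi\), the relation \(\alpha\eta^2=\delta\zeta^2\), the cofactor sum \(Z_{11}+Z_{22}=2m!g^{m-1}(gx-2m\eta\zeta)\), and \(p=\eta\zeta=gx/(m-1)-T/N\) with \(N=m(m-1)m!g^{m-2}\). There is a genuine gap: the uniform-in-\(m\) inequalities, which are the entire content of the \(n\geq5\) case, are only hoped for, and the route you sketch cannot work as stated. The cleared numerators are not polynomials in \(m\). The parameter \(m\) enters through \((2w+mg)^m\), \((2y+mg)^m\), \(g^{m-2}\) and \(m!\), so the \(t\)-degree grows with \(m\), and there is no coefficient table in \(k=m-3\) to check. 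Squaring the full analogue of (D4.6) as in (D4.7) only makes this worse. Restricting to a region where \(\delta\) lies below the threshold presupposes that such a region exists, and for \(m\geq3\) it does not. Likewise, your appeal to ``comparing cleared numerators as in dimension four'' for \(\alpha,\delta>0\) does not transfer. In the paper, \(\delta>0\) comes from the lower bound below, and \(\alpha>0\) from \(\alpha\eta^2=\delta\zeta^2\) with \(\zeta\neq0\).

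The missing idea is that for \(m\geq3\) you can discard \(L\) entirely. Your pair of ``inequalities'' is not the right one: the bound on \(T\) is the goal, not an ingredient.

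Nonpositivity of \(Z_{11}+Z_{22}\) gives \(gx\leq2mp\), hence \(p\geq T/(m(m+1)m!g^{m-2})\). Also \(T\geq\sqrt{\alpha\delta}=\delta\sqrt{\alpha/\delta}\). So, for \(w\geq y\), the contradiction with \(p<\min\{y^2\sqrt{\alpha/\delta},w^2\sqrt{\delta/\alpha}\}\) follows from two radical-free inequalities. The first is \(\delta\geq m(m+1)m!g^{m-2}y^2\). The second is the selector \(w^2\delta\geq y^2\alpha\), which says the first term realizes the minimum.

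For the first inequality, \(\delta=M_y-K_y^2/D\) is increasing in \(w\), since only \(D\) depends on \(w\). At \(w=y=1\), set \(t=g\), \(b=m!t^{m-2}\) and \(z=(2+mt)^m/b\). The cleared difference is \(m^2b^2\{z^2-((t-1)^2+3)z+t^2\}\), which is positive because \(z>t^2+6\).

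For the selector, the cleared numerator of \(q^2\delta-\alpha\) is \(m^2\{UV(q^2-1)+m!t^{m-1}[U(t-2q)+q^2V(2-t)]\}\), with \(U=(2q+mt)^m\) and \(V=(2+mt)^m\). Expand in \(v=q-1\) using \(U=\sum_k\binom mk2^kW^{m-k}v^k\) and \(W=2+mt\). Each \(v^k\)-coefficient then has at most three \(t\)-degrees that can be negative. Each of these is dominated by a binomial term of the positive part, using \(m^m>m!\).

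This is exactly why the \(t<5/16\) split and the Bernstein certificate of order four disappear. For \(m=2\) the threshold is the constant \(12\), which \(\delta\) can undercut. For \(m\geq3\) the threshold \(m(m+1)m!t^{m-2}\) is beaten everywhere.
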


\begin{proof}
Put \(m=n-2\), so \(m\geq3\).  By
Corollary~\ref{cor:two-independent-zeroes}, every boundary maximizer has two
zeroes in distinct rows and columns.  Move them to \((1,1),(2,2)\), and let
\(\mathcal Z\) be the stratum having exactly these zeroes. Continuity under
convex mixing makes the supremum on \(\mathcal Z\) equal to the global
maximum. Hwang's attained-stratum lemma~\cite[Lemma 3.4]{CW} therefore
supplies a global maximizer in \(\mathcal Z\) of the form
\[
A=\begin{pmatrix}
0&a&b&\cdots&b\\
c&0&d&\cdots&d\\
e&f&g&\cdots&g\\
\vdots&\vdots&\vdots&&\vdots\\
e&f&g&\cdots&g
\end{pmatrix},\qquad a,b,c,d,e,f,g>0,              \tag{D56.1}
\]
where the final row and column classes have size \(m\).

Write
\[
a=x+\xi,\quad c=x-\xi,\quad b=y+\eta,\quad d=y-\eta,
\quad e=w+\zeta,\quad f=w-\zeta.
\]
The three odd perturbations preserve total mass, and
\[
|\xi|<x,\qquad|\eta|<y,\qquad|\zeta|<w.             \tag{D56.2}
\]
The permanent is
\[
\operatorname{per}A=m!\{acg^m+m(ade+bcf)g^{m-1}
+m(m-1)bdefg^{m-2}\}.                              \tag{D56.3}
\]
Put
\[
\begin{gathered}
u=(2w+mg)^m,\qquad v=(2y+mg)^m,\qquad D=u+v-m!g^m,\\
K_w=m(u-m!g^{m-1}w),\qquad K_y=m(v-m!g^{m-1}y),\\
M_w=m^2u-m(m-1)m!g^{m-2}w^2,\\
M_y=m^2v-m(m-1)m!g^{m-2}y^2,\qquad
N=m(m-1)m!g^{m-2}.
\end{gathered}
\]
Since \(m^m>m!\), the binomial expansion gives \(D,K_w,K_y>0\).
Expanding the two marginal products and (D56.3) gives
\[
\begin{split}
\Phi(A)-\Phi(A)|_{\xi=\eta=\zeta=0}
={}&-D\xi^2-2K_w\xi\eta+2K_y\xi\zeta-M_w\eta^2\\
&+2mm!g^{m-1}x\eta\zeta-M_y\zeta^2-N\eta^2\zeta^2.
\end{split}
\tag{D56.4}
\]
Odd stationarity first gives
\(\xi=(-K_w\eta+K_y\zeta)/D\). After substitution, (D56.4) is
\[
-\alpha\eta^2+2\beta\eta\zeta-\delta\zeta^2
-N\eta^2\zeta^2,                                  \tag{D56.5}
\]
where
\[
\alpha=M_w-\frac{K_w^2}{D},\quad
\delta=M_y-\frac{K_y^2}{D},\quad
\beta=mm!g^{m-1}x-\frac{K_wK_y}{D}.                \tag{D56.6}
\]
Its stationary equations imply
\[
\alpha\eta^2=\delta\zeta^2.                        \tag{D56.7}
\]

Let \(Z_{11},Z_{22}\) denote the derivative at a missing entry minus the
common derivative at a positive entry. Hwang's multiplier condition
\cite[Lemma 3.6]{CW} requires both quantities to be nonpositive. Comparing
with entries \((1,2),(2,1)\) cancels the marginal-product derivatives. The four
permanental cofactors give
\[
Z_{11}+Z_{22}
=2m!g^{m-1}(gx-2m\eta\zeta).                       \tag{D56.8}
\]
This is positive if \(\eta\zeta\leq0\), already contradicting the two
one-sided multiplier conditions. Suppose \(p=\eta\zeta>0\). The determinant
of the \((\eta,\zeta)\)-Hessian of (D56.5), simplified with its stationary
equations, is \(16N\alpha\eta^2\). Local maximality gives \(\alpha\geq0\).
The strict lower bound on \(\delta\) below and (D56.7) then give
\(\alpha>0\).

Put
\[
L=\frac{K_wK_y}{D},\qquad T=L+\sqrt{\alpha\delta}.
\]
The stationary equations and (D56.6) give
\[
p=\frac{gx}{m-1}-\frac{T}{N},\qquad
\frac{\eta}{\zeta}=\sqrt{\frac{\delta}{\alpha}}.   \tag{D56.9}
\]
Entry positivity therefore implies
\[
p<p_*:=\min\left\{
y^2\sqrt{\frac{\alpha}{\delta}},
w^2\sqrt{\frac{\delta}{\alpha}}\right\}.           \tag{D56.10}
\]
If (D56.8) were nonpositive, then \(p\geq gx/(2m)\);
combining this with (D56.9) gives
\[
p\geq\frac{T}{m(m+1)m!g^{m-2}}.                   \tag{D56.11}
\]

It remains to contradict (D56.10)--(D56.11). By interchanging \(y,w\),
assume \(w\geq y\), normalize \(y=1\), and put
\(t=g/y,\ q=w/y\). We prove, uniformly for \(m\geq3\),
\[
\delta\geq s_m:=m(m+1)m!t^{m-2},\qquad
q^2\delta\geq\alpha.                               \tag{D56.12}
\]
Set
\[
\begin{gathered}
U=(2q+mt)^m,\quad V=(2+mt)^m,\quad D=U+V-m!t^m,\\
A_m=M_wD-K_w^2,\qquad \Delta_m=M_yD-K_y^2.
\end{gathered}
\]
Only \(D\) depends on \(q\) in
\(\delta=M_y-K_y^2/D\), and \(D\) is increasing, so \(\delta\) is
increasing. Put \(b=m!t^{m-2}\). At \(q=1\), set \(P=(2+mt)^m\).
Then \(D=2P-bt^2,\ K_y=m(P-bt)\), and
\(M_y=m^2P-m(m-1)b\). Consequently
\[
\begin{split}
\frac{\Delta_m-s_mD}{m^2}
&=(P-2b)(2P-bt^2)-(P-bt)^2\\
&=P^2-b(t^2-2t+4)P+b^2t^2,
\end{split}
\]
which is equivalently
\[
\Delta_m-s_mD
=m^2b^2\{z^2-((t-1)^2+3)z+t^2\},\qquad
z=\frac{(2+mt)^m}{b}.                              \tag{D56.13}
\]
The \(t^m\) and \(t^{m-2}\) binomial terms imply
\[
z>t^2+\frac{2m^{m-2}}{(m-2)!}\geq t^2+6
>(t-1)^2+3.
\]
The middle quantity is \(6\) at \(m=3\); its successive ratio is
\(((m+1)/m)^{m-2}(m+1)/(m-1)>1\). Hence
the brace in (D56.13) is positive, proving the first inequality.

For the second inequality put \(c=m!t^{m-1}\). The lower terms in
\(q^2M_y-M_w\) cancel, leaving \(m^2(q^2V-U)\). Expanding
\((q^2M_y-M_w)D+K_w^2-q^2K_y^2\), with
\(K_w=m(U-cq), K_y=m(V-c), D=U+V-ct\), and cancelling the square
terms gives
\[
\frac{q^2\Delta_m-A_m}{m^2}
=UV(q^2-1)+m!t^{m-1}\{U(t-2q)+q^2V(2-t)\}.         \tag{D56.14}
\]
Put \(q=1+v,\ W=2+mt\), and
\[
U=\sum_{k=0}^m u_kv^k,\qquad
u_k=\binom mk2^kW^{m-k},
\]
with \(u_k=0\) outside its displayed range. The coefficient \(C_k\) of
\(v^k\) on the right of (D56.14) is
\[
\begin{split}
C_k={}&W^m(2u_{k-1}+u_{k-2})+m!t^{m-1}H_k,\\
H_k={}&(t-2)u_k-2u_{k-1}\\
&+(2-t)W^m(\mathbf1_{k=0}+2\mathbf1_{k=1}+\mathbf1_{k=2}).
\end{split}                                        \tag{D56.15}
\]
Here \(C_0=0\). For \(k=1\),
\[
C_1=2W^{m-1}\!\left[
W^{m+1}-m!t^{m-1}
\{mt^2-2(m-1)t+2(m-1)\}\right].
\]
Only degrees \(m-1,m+1\) in the bracket can be negative. Their positive
coefficients \(2(m+1)m^m,m^{m+1}\) exceed
\(2(m-1)m!,m\,m!\), respectively.

For \(k=2\),
\[
\begin{split}
C_2={}&W^{m-2}\{W^{m+1}(4m+W)+m!t^{m-1}R_m(t)\},\\
R_m(t)={}&-4(m-1)(m+2)-2(m-1)(m-2)t\\
&+2m(m-2)t^2-m^2t^3.
\end{split}
\]
Only degrees \(m-1,m,m+2\) can be negative. The first two are dominated
by \(4mW^{m+1}\), since
\[
8m(m+1)m^m>
4(m-1)(m+2)m!,\quad
8m(m+1)m^m>2(m-1)(m-2)m!,
\]
and \(m^{m+2}>m^2m!\) handles the last.

For \(3\leq k\leq m\),
\[
H_k=-\frac{2^k(m+1)}{m-k+1}\binom mk
W^{m-k}\{(k-1)t+2\}.
\]
After factoring \(2^{k-2}W^{m-k}\), the remaining polynomial is
\[
\begin{split}
4\binom m{k-1}W^{m+1}+\binom m{k-2}W^{m+2}
{}-\frac{4m!(m+1)}{m-k+1}\binom mk
t^{m-1}\{(k-1)t+2\}.
\end{split}
\]
Only degrees \(m-1,m\) can be negative. The coefficient of the first
positive term at both degrees is
\(8\binom m{k-1}(m+1)m^m\). Using
\(\binom mk/(m-k+1)=\binom m{k-1}/k\), the negative coefficients are
\[
\frac{8m!(m+1)}k\binom m{k-1},\qquad
\frac{4m!(m+1)(k-1)}k\binom m{k-1},
\]
both smaller because \(m^m>m!\). Finally,
\[
C_{m+1}=2^{m-1}\{W^m(4+mW)-4m!t^{m-1}\}>_{\rm coeff}0,\quad
C_{m+2}=2^mW^m>_{\rm coeff}0;
\]
at the only threatened degree of \(C_{m+1}\),
\(4W^m\) contributes \(8m^m>4m!\). Thus every nonconstant coefficient
in (D56.14) is positive, proving the second inequality.

Appendix~\ref{app:uniform-bookkeeping} records a direct symbolic-expansion
scheme for the permanent, odd expansion, cofactor sum, and coefficient
inequalities.  It is included only as transparent bookkeeping for the
lowest multiplicities \(m=3,4\); the all-\(m\) proof is the coefficient
argument above.

The second inequality in (D56.12) selects the first term in (D56.10).
Restoring the normalization gives
\(\delta\geq m(m+1)m!g^{m-2}y^2\). The first inequality, together with
\(L>0\), therefore gives
\[
T\geq\sqrt{\alpha\delta}
=\delta\sqrt{\alpha/\delta}
\geq m(m+1)m!g^{m-2}p_*.
\]
This contradicts (D56.10)--(D56.11). Thus (D56.8) is always positive, so
(D56.1) cannot be a maximizer. The stratum bridge excludes every boundary
maximizer, and Hwang's positive-support theorem identifies the unique
maximizer as \(U_n\).
\end{proof}

\section{Completion of the proof}\label{sec:completion}

\begin{proof}[Proof of Theorems~\ref{thm:boundary-exclusion} and
\ref{thm:dittert}]
Proposition~\ref{prop:dittert-n4-supports} excludes boundary maximizers in
order four, and Proposition~\ref{prop:dittert-nge5} excludes them in every
order \(n\geq5\).  This proves Theorem~\ref{thm:boundary-exclusion}.
Theorem~\ref{thm:hwang-positive} then identifies the unique maximizer as
\(U_n\) for every \(n\geq4\).  Sinkhorn's theorem gives \(n=2\), and
Hwang's theorem gives \(n=3\) \cite{Sinkhorn,Hwang87}.  Hence
Conjecture~\ref{conj:dittert} holds in every dimension, proving
Theorem~\ref{thm:dittert}.
\end{proof}

\appendix

\section{Exact rational verification of the spectral alternatives}
\label{app:spectral-certificate}

This appendix specifies the finite arithmetic used in
Theorem~\ref{thm:dittert-7-13}.  It is included to make the argument
reproducible from this paper alone.  The Sturm-sequence and exact
root-isolation procedures are standard; see \cite{BasuPollackRoy}.  Every input is one of the rational
numbers printed in the proof, and every comparison is performed after
cross-multiplication over \(\mathbb Z\).

For a polynomial \(p(x)=\sum_{i=0}^d p_ix^i\) and a rational interval
\(I=[a,b]\), define interval Horner evaluation recursively by
\[
 H_d=[p_d,p_d],\qquad H_i=I H_{i+1}+[p_i,p_i],
\]
where the product of two intervals is the interval spanned by the four
endpoint products.  Thus \(p(I)\subseteq H_0\), with no rounding.  For a
rational symmetric matrix \(M\), exact \(LDL^T\) elimination uses
\[
 d_i=M_{ii}-\sum_{k<i}l_{ik}^2d_k,\qquad
 l_{ji}=\frac{M_{ji}-\sum_{k<i}l_{jk}l_{ik}d_k}{d_i}.
\]
All pivots \(d_i>0\) certify positive definiteness.

The finite procedure is the following.

\begin{enumerate}
\item Evaluate every displayed AM--GM marginal inequality and every
  block-permanent inequality as a rational number.
\item For every block order \(m\), enumerate the pairs
  \(0\leq i,j\leq m\) with \(i+j>m\) and check the displayed Hall-loss
  inequality.
\item Form the weighted path matrix from the displayed \(H_\ell\) and
  marginal bounds and apply the exact \(LDL^T\) recursion above.
\item For \(n=8\), subdivide
  \([0,491/10^4]\) into \(16\) equal rational intervals and apply interval
  Horner evaluation to each of the four degree-eight polynomials in
  (14).
\item For \(n=7\), subdivide
  \([0,783/10^4]\) into \(4096\) equal rational intervals.  On each
  interval and for each \(\ell=1,2,3\), choose the least
  \(B\in10^{-8}\mathbb Z\) for which the first polynomial in (18) has
  positive interval-Horner lower endpoint; then check the second
  polynomial in (18) with that same \(B\).  Finally apply interval Horner
  evaluation to the cubic spectral comparison following (18).
\end{enumerate}

The following table gives strict rational lower bounds for the smallest
output in every finite family of comparisons.  A dash means that the
corresponding dimension uses no check of that type.  The headings ``local
1'' and ``local 2'' refer, in order, to the additional Hall/leakage or
Hall/polynomial tests described in the proof.

\begin{exacttable}
\[
\begin{array}{c|c|c|c|c|c|c}
n&\text{marginal or }a_n&\text{local 1}&\text{local 2}
 &\text{block}&\text{path pivot}&\text{spectral}\\ \hline
7&3/10^5&1/10^3&1/10^{12}&1/10^6&1/1250&1/10^3\\
8&1/200000&1/10^3&1/20000&-&1/500&3/10^4\\
9&1/500000&3/10^6&-&1/125000&1/5000&1/25000\\
10&19/10^9&-&-&9/10^7&-&1/10^4\\
11&1/(4\cdot10^7)&-&-&1/250000&-&3/50000\\
12&1/10^6&-&-&1/10^6&-&1/2500\\
13&9/10^7&-&-&1/10^6&-&1/2000
\end{array}
\tag{A.1}
\]
\end{exacttable}
Each entry is a deliberately shortened lower bound, not a decimal
approximation to the actual margin.

\section{The prescribed-two-zero Sturm calculation}
\label{app:sturm-certificate}

Let \(P_n\) be the polynomial displayed before
Lemma~\ref{lem:dittert-certificate}.  Starting with
\[
 S_0=P_n',\qquad S_1=P_n'',\qquad
 S_{k+1}=-\operatorname{rem}(S_{k-1},S_k),
\]
delete zero terminal terms to obtain the Sturm sequence.  At a rational
number \(x\), let \(V(x)\) be the number of sign changes in
\((S_0(x),S_1(x),\ldots)\), after zero entries are omitted.  The number of
roots of \(P_n'\) in \((a,b)\) is \(V(a)-V(b)\).  Repeated rational
bisection isolates every such root, after which the interval-Horner rule
of Appendix~\ref{app:spectral-certificate} bounds \(P_n\) there.

The complete output needed in Lemma~\ref{lem:dittert-certificate} is
\begin{exacttable}
\begin{equation}
\resizebox{\textwidth}{!}{$
\begin{array}{c|c|c|c|c|c}
n&\deg P_n&
\begin{gathered}\text{critical-point}\\[-2pt]\text{isolating interval}\end{gathered}
&\#\text{ roots}&
\begin{gathered}q_n\text{ lower}\\[-2pt]\text{bound}\end{gathered}&
\begin{gathered}f_n(q_n)\text{ lower}\\[-2pt]\text{bound}\end{gathered}\\ \hline
14&14&
[7647323/10^8,\;1911831/25000000]&1&
15781/(2\cdot10^9)&1/500000000\\
15&15&
[3553391/50000000,\;7106783/10^8]&1&
3001/10^9&7/10^9
\end{array}$}
\tag{B.1}
\end{equation}
\end{exacttable}
where
\[
 f_n(q)=q-\gamma_n-\frac{n^3q^2}{4(1-\gamma_n)}.
\]
Exact endpoint evaluation gives
\begin{exacttable}
\begin{equation}
\resizebox{\textwidth}{!}{$
\begin{array}{c|c|c}
n&P_n(0)&P_n(1/(n-2))\\ \hline
14&1925/35831808&
206787109375/25999348907114496\\
15&479001600/23298085122481&
1639976960656103731200/542800770374370512771595361
\end{array}$}
\tag{B.2}
\end{equation}
\end{exacttable}
and both endpoint values exceed the corresponding lower bound in
(B.1).  Interval Horner evaluation on the two isolating intervals gives
the same strict lower bounds.  Hence the table covers every possible
global minimum.

Finally,
\[
 f_n'(q)=1-\frac{n^3q}{2(1-\gamma_n)}.
\]
Substituting the endpoint upper bound \(q\leq P_n(0)\) makes this positive
for \(n=14,15\), again by integer cross-multiplication.  Therefore the
lower bounds for \(f_n(q_n)\) in (B.1) follow from the lower bounds for
\(q_n\).

For the stated limitation at \(n=13\), take
\[
 \delta=\frac{\gamma_{13}}{91}
 =\frac{68428800}{302875106592253},\quad
 x_0=\frac{82762562}{10^9},\quad
 t_-=\frac{856898537}{5\cdot10^{11}}.
\]
Then \(t_-^2\leq13\delta/(1-\delta)\), and exact evaluation gives
\[
 P_{13}(x_0)(1-t_-)^{13}-(\gamma_{13}-\delta)
 <-\frac9{10^8}.
\tag{B.3}
\]
Thus the scalar comparison really does fail in dimension \(13\).

\section{Capacity-route countermodels}
\label{app:capacity-controls}

The diagnostic in the dimension-four discussion is also completely
rational.  For the data \((s,d,x)\) in the table, put
\[
\begin{split}
\rho&=1-\left(1-\frac ds\right)^s
          \left(1+\frac d{n-s}\right)^{n-s},\\
\mu&=1-\frac{(2-\gamma_n)d}{1-\rho},\qquad
q^+=P_n(x),\\
M&=q^+(1-\rho)\mu^n-(\gamma_n-2\rho).
\end{split}
\]
The following strict bounds follow by direct cross-multiplication.
\begin{exacttable}
\[
\begin{array}{c|c|c|c|c|c|c}
n&s&d&x&\rho\text{ upper}&q^+\text{ upper}&M\text{ upper}\\ \hline
4&2&147/1000&3793/12500&11/1000&103/1000&-45/1000\\
5&2&101/1000&4723/20000&43/10000&41/1000&-16/1000\\
6&3&69/1000&963/5000&16/10000&17/1000&-5/1000
\end{array}
\tag{C.1}
\]
\end{exacttable}
Since \(M<0\) in every row, even the strengthened scalar capacity route
does not reach the required permanent lower bound.

\section{Complete order-four support enumeration}
\label{app:support-enumeration}

Encode a zero set by a \(16\)-bit mask.  A mask is admissible if

\begin{enumerate}
\item it contains two entries in distinct rows and distinct columns; and
\item for no nonempty proper \(I\subset\{1,2,3,4\}\) does it contain the
  entire rectangle \(I\times J\) with \(|J|=4-|I|\).
\end{enumerate}

The second condition is precisely the zero-pattern test for full
indecomposability.  For every admissible mask, take the lexicographically
least image under independent row and column permutations and transpose.
The following pseudocode is an exhaustive specification:
\[
\begin{array}{l}
\mathcal C\leftarrow\varnothing;\\
\textbf{for }z=1,\ldots,2^{16}-1:\\
\quad\textbf{if }z\text{ satisfies (1) and (2):}\\
\qquad c\leftarrow\min\{\tau(\sigma_r z\sigma_c):
 \sigma_r,\sigma_c\in S_4,\ \tau\in\{1,\text{transpose}\}\};\\
\qquad\mathcal C\leftarrow\mathcal C\cup\{c\}.
\end{array}
\tag{D.1}
\]
Writing \(ij\) for the zero entry \((i,j)\), the resulting \(23\)
representatives are:
\begin{exacttable}
\[
\begin{array}{c|c|l@{\qquad}c|c|l}
\#&|Z|&Z&\#&|Z|&Z\\ \hline
1&2&12,21&13&5&12,14,22,23,31\\
2&3&11,12,21&14&5&13,14,22,31,41\\
3&3&12,13,21&15&5&11,14,23,32,41\\
4&3&13,22,31&16&6&12,13,21,23,31,32\\
5&4&11,13,21,22&17&6&12,14,21,23,31,32\\
6&4&13,14,21,22&18&6&12,14,22,23,31,41\\
7&4&12,13,21,31&19&6&12,14,21,23,32,41\\
8&4&11,13,22,31&20&6&13,14,21,23,32,41\\
9&4&13,14,22,31&21&7&13,14,22,23,31,32,41\\
10&4&14,23,32,41&22&7&13,14,22,24,32,33,41\\
11&5&12,13,21,22,31&23&8&13,14,22,24,31,33,41,42\\
12&5&13,14,21,22,31&&&
\end{array}
\tag{D.2}
\]
\end{exacttable}
Counting by \(|Z|=2,\ldots,8\) gives
\((1,3,6,5,5,2,1)\), as asserted.

\section{The order-four Bernstein certificate}
\label{app:bernstein-certificate}

We give the exact arithmetic suppressed in the transition from (D4.5) to
(D4.7).  After \(y=1\), \(t=g/y\), and \(q=w/y\), write
\[
\begin{gathered}
D=3t^2+4tq+4t+2q^2+2,\\
K_w=8t^2+12tq+8q^2,\qquad K_y=8t^2+12t+8,\\
M_w=8t^2+16tq+6q^2,\qquad M_y=8t^2+16t+6,\\
A=2M_wD-\tfrac12K_w^2,\qquad
\Delta=2M_yD-\tfrac12K_y^2.
\end{gathered}
\]
Thus \(\alpha=A/D\), \(\delta=\Delta/D\), and
\(L=K_wK_y/(2D)\).

First, the numerator
\[
S(t,q)=q^2\Delta-A
\]
satisfies, on writing \(q=1+v\),
\[
\begin{split}
S(t,1+v)={}&
(64v+160v^2+128v^3+32v^4)\\
&+t(224v+448v^2+288v^3+64v^4)\\
&+t^2(384v+576v^2+256v^3+32v^4)\\
&+t^3(256v+256v^2+64v^3)
+t^4(32v+16v^2).
\end{split}
\tag{E.1}
\]
Hence \(q^2\delta\geq\alpha\) for \(q\geq1\), so the first term is the
minimum in (D4.5).

Next put
\[
\begin{split}
j(t,q)&=(\Delta-12D)/8\\
&=-4-4t+8tq^2+3t^2+16t^2q+4t^2q^2
  +8t^3+8t^3q+2t^4.
\end{split}
\]
It is increasing in \(q\geq1\), while
\[
j(t,1)=2t^4+16t^3+23t^2+4t-4,\qquad
j(5/16,1)=113/32768>0.
\]
Thus \(\delta<12\), equivalently \(j(t,q)<0\), forces \(t<5/16\).
Finally let \(i(t,q)=-A/4\).  Its derivative with respect to \(t\) has
strictly negative coefficients, and
\[
i(5/16,3+v)=
\frac{2767}{16384}+\frac{27225}{256}v
+\frac{21897}{256}v^2+\frac{91}{4}v^3+2v^4>0.
\tag{E.2}
\]
Since \(\alpha>0\) means \(i<0\), (E.2) forces \(q<3\).

Define the polynomial \(h\) by
\[
\frac14K_w^2K_y^2\Delta-(12D-\Delta)^2A
=1024D\,h(t,q).
\tag{E.3}
\]
Because \(D>0\), inequality (D4.7) is equivalent to \(h\geq0\).
On the rectangle \(t=5u/16,\ q=1+2v\), write
\[
h(5u/16,1+2v)
=\sum_{r=0}^{10}\sum_{s=0}^{6}
b_{rs}\binom{10}{r}u^r(1-u)^{10-r}
{}\binom{6}{s}v^s(1-v)^{6-s}.
\tag{E.4}
\]
For compactness, the table gives the exact integers
\[
\widehat b_{rs}
=16^{10}\binom{10}{r}\binom{6}{s}b_{rs}.
\]
\begin{exacttable}
\begin{equation}
\resizebox{\textwidth}{!}{$
\begin{array}{c|rrrr}
r\backslash s&0&1&2&3\\ \hline
0&0&52776558133248&527765581332480&2005509209063424\\
1&10995116277760&755914244096000&6797730638725120&24744509183098880\\
2&131597797949440&4871953202544640&39783766666772480&138818668766167040\\
3&684338614108160&18402251773050880&138255740824453120&463593245528555520\\
4&2037861989744640&44889870077788160&314269358547271680&1015984788235878400\\
5&3840421773967360&73771295038242816&486571381652193280&1521712544442482688\\
6&4759162783989760&82707597515161600&518563451833221120&1573843691725127680\\
7&3883644615229440&62501158991626240&375187130506772480&1108144168546467840\\
8&2014444474940160&30493375333793280&176248115411877120&507813903405982720\\
9&603126347120640&8680816622338560&48525937286791680&136667246148121600\\
10&79459459141860&1095864216308568&5945463454398300&16395153287757264
\end{array}$}
\tag{E.5a}
\end{equation}
\[
\begin{array}{c|rrr}
r\backslash s&4&5&6\\ \hline
0&3588805953060864&3008263813595136&949978046398464\\
1&43238294762291200&35720384007372800&11173786917273600\\
2&237112774906347520&193664284444590080&60436718205009920\\
3&775102262820208640&627284432293724160&196151556377149440\\
4&1665176387746529280&1337219171766763520&419932056877793280\\
5&2448320986913701888&1952660699807219712&616306674696716288\\
6&2488951360659456000&1972460755271680000&625677506791014400\\
7&1724493900574392320&1358340519418593280&432834714309918720\\
8&778409822206915840&609511543169768960&194986892197712640\\
9&206526568418452480&160778523305208320&51607187621460480\\
10&24442862602674524&18920540600095576&6090767843652324
\end{array}
\tag{E.5b}
\]
\end{exacttable}
All \(77\) entries in (E.5a)--(E.5b) are nonnegative and only
\(\widehat b_{00}\) vanishes.  Since the tensor-product Bernstein basis is
nonnegative on \([0,1]^2\), (E.4)--(E.5b) prove \(h\geq0\), with strict
positivity in the case used in the proof.

\section{Bookkeeping for the uniform multiplicity identities}
\label{app:uniform-bookkeeping}

The universal argument following (D56.12) is symbolic and does not depend
on finite testing.  For transparency, its lowest-multiplicity expansion can
be reproduced by the following finite algebraic recipe:

\begin{enumerate}
\item Form the seven-variable block matrix (D56.1), retaining the last
  row and column multiplicity as the formal integer \(m\).
\item Enumerate the three possible assignment types of the first two rows:
  both use the two exceptional columns, exactly one does, or neither does.
  Their multiplicities give the three terms of (D56.3).
\item Substitute the odd coordinates, expand the two marginal products and
  (D56.3), and collect in \(\xi,\eta,\zeta\).  This gives (D56.4).
\item Form \(D,K_w,K_y,M_w,M_y,N\), eliminate \(\xi\), and collect the
  numerator of each assertion in (D56.12) first in \(q-1\) and then in
  \(t\).
\end{enumerate}

For the two lowest multiplicities, the exact coefficient bookkeeping is:
\begin{exacttable}
\[
\begin{array}{c|c|c|c|c}
m&n&
\begin{gathered}\#\text{ lower-numerator}\\[-2pt]\text{terms for }\delta\end{gathered}
&\begin{gathered}\text{least positive}\\[-2pt]\text{coefficient}\end{gathered}
&\begin{gathered}\#\text{ selector terms},\\[-2pt]\text{least coefficient}\end{gathered}\\ \hline
3&5&7&576&(29,576)\\
4&6&9&4096&(44,4096)
\end{array}
\tag{F.1}
\]
\end{exacttable}
These rows are consistency checks on the displayed identities, not a
substitute for the coefficientwise proof for arbitrary \(m\).

\section*{Statement on AI-assisted discovery and human verification}
The TARS agent system assisted with exploratory derivations with an autonomous mathematical search. X. Dai completed proof reconstruction, supplemented omitted details, literature review and manuscript writing, and manually verified all mathematical results.

\end{document}